\documentclass[12pt,a4paper,reqno]{amsart}
\usepackage{graphics,epic}
\usepackage{amsmath,amssymb, amsthm}
\usepackage[all,2cell]{xy}
\usepackage{color}

\setlength{\topmargin}{-15mm}
\setlength{\oddsidemargin}{0mm}
\setlength{\evensidemargin}{0mm}
\setlength{\textheight}{235mm}
\setlength{\textwidth}{160mm}

\newtheorem{thm}{Theorem}[section]
\newtheorem*{theorem*}{Theorem}

\newtheorem{lem}[thm]{Lemma}
\newtheorem{prop}[thm]{Proposition}

\newtheorem*{conjecture*}{Conjecture}

\newcommand\mtx[1]{\begin{bmatrix} #1 \end{bmatrix}}

\newcommand{\ch}{{\mathcal H}}

\newcommand{\E}{{\mathbb E}}

\newcommand{\Z}{\mathbb{Z}}

\newcommand{\C}{\mathbb{C}}

\def\wt{{\rm wt}}

 \topmargin 0in

\def\C{{\mathbb C}}

\def\R{{\mathbb R}}

\def\Z{{\mathbb Z}}

\def\H{{\mathbb H}}

\def\1{{\bf 1}}
\def\tr{{\rm tr}}

\def \a{\alpha}

\def \e{\epsilon}
\def \h{\mathfrak{h}}
\def \l{\lambda}

\def \g{\mathfrak{g}}
\def\wh{\widehat{\mathfrak h}}
\def\wg{\widehat{\mathfrak g}}

\def\<{\langle}
\def\>{\rangle}

\def\ch{{\rm ch}}
\def\E{{\mathbb E}}

\setcounter{page}{1}

\begin{document}
\title[Parafermion vertex operator algebras]{Trace functions of  the parafermion vertex operator algebras}
\author{Chongying Dong}
\address{ Department of Mathematics, University of California, Santa Cruz, CA 95064}
 \email{dong@ucsc.edu}
\thanks{The first  author was supported by China NSF grant 11871351}
\author{Victor Kac}
\address{Department of Mathematics, MIT, 77 Mass. Ave, Cambridge, MA 02139}
\email{kac@math.mit.edu}
\thanks{}
\author{Li Ren}
\address{School of Mathematics, Sichuan University,
 Chengdu 610064 China }
 \email{renl@scu.edu.cn}
 \thanks{The third author was supported by  China NSF grant 11671277}
\maketitle

\begin{abstract}
The trace functions  for the Parafermion vertex operator algebra associated to any finite dimensional simple Lie algebra $\g$ and any positive integer $k$ are studied and an explicit modular transformation formula of the trace functions is obtained.
\end{abstract}

\section{Introduction}

This paper is a continuation of our study of the Parafermion vertex operator algebra $K(\g,k)$ associated to any finite dimensional simple Lie algebra $\g$ and positive integer $k.$ In particular, we determine
the modular transformation formula
of the trace functions for the Parafermion vertex operator algebras.

The Parafermion vertex operator algebra $K(\g,k)$ is the commutant of the Heisenberg vertex operator algebra $M_{\wh}(k)$ in
the affine vertex operator algebra $L_{\wg}(k,0),$ and can also be regarded as the commutant of the lattice vertex operator
algebra $V_{\sqrt{k}Q_L}$ in   $L_{\wg}(k,0)$ where $Q_L$ is the lattice spanned by the long roots of $\g.$
While the $C_2$-cofiniteness  of the Parafermion vertex operator algebras was proved in \cite{DW2} and \cite{ALY1}, the rationality was established in \cite{DR} with the help of a result in \cite{CM} on the abelian orbifolds for rational and $C_2$-cofinite vertex operator algebras.  The irreducible modules of $K(\g,k)$ were classified in \cite{ALY1} for $\g=sl_2$ and
in \cite{DR}, \cite{ADJR} for general $\g,$ and the fusion rules  were computed in \cite{DW4} for $sl_2$ and in \cite{ADJR} for general $\g$ with the help of quantum dimensions.
See recent work in \cite{DLY}, \cite{DLWY}, \cite{DW1}, \cite{DW4}, \cite{ALY2} and \cite{JW}
for other topics on Parafermion vertex operator algebras.

The trace functions $Z_M(v,\tau)$  are the main objects in this paper where $M$ is an irreducible $K(\g,k)$-module
and $v\in K(\g,k).$ Since $K(\g,k)$ is rational and $C_2$-cofinite, the space spanned by $Z_M(v,\tau)$ for the irreducible modules $M$ affords a representation of the modular group $SL_2(\Z)$ \cite{Z}.
Our goal is to determine this representation
explicitly. The irreducible $K(\g,k)$-modules are labeled by $M^{\Lambda,\lambda}.$ Here  $\Lambda$ is a dominant weight
of $\g$ such that $\<\Lambda,\theta\>\leq k,$ $\<,\>$ is the normalized invariant bilinear form on $\g$ so that the squared length of a  long root is $2,$ $\theta$ is the maximal root,  $\lambda\in \Lambda+Q $
modulo $ kQ_L$ and $Q$ is the root lattice.
In the case $v=\1$ the functions $\chi_{M^{\Lambda,\lambda}}(\tau)$ are a special kind of branching functions studied previously in \cite{K}, \cite{KP}. Moreover,  $\chi_{M^{\Lambda,\lambda}}(\tau)/\eta(\tau)^l$ is
the string function in \cite{K},  \cite{KP} where $l$ is
the rank of $\g.$ In fact, explicit modular transformation formulas for  branching functions were obtained in \cite{K}. Our main result in this paper is that the same transformation formula for the branching functions is valid
for the trace function with $\1$ replaced by any $v\in K(\g,k).$

The main idea is to use the modular transformation formulas for the affine vertex operator algebra   $L_{\wg}(k,0)$ and
lattice vertex operator algebras $V_{\sqrt{k}Q_L}.$ There are two modular transformation formulas for the affine vertex operator algebra   $L_{\wg}(k,0)$ given in \cite{K} explicitly  and \cite{Z} abstractly.  Using a transformation formula for the abstract theta functions studied in \cite{Kr},  \cite{M}, one can easily show that these two transformation formulas are the same. Unfortunately, this can only give the modular transformation formula for the character  $\chi_{M^{\Lambda,\lambda}}(\tau),$ not for the one point function
$Z_{M^{\Lambda,\lambda}}(v,\tau).$ Again,results in \cite{Kr} on abstract theta functions including some vector $w$ help us to solve the problem.  To explain what kind of vector $w$ is, we recall from \cite{DM2} that if $V=\oplus_{n\geq 0}V_n$ is a rational , $C_2$-cofinite, simple
vertex operator algebra of CFT type, then $V_1$ is a reductive Lie algebra whose rank (the dimension of a Cartan subalgebra)
is less than or equal to the central charge. Fix a Cartan subalgebra $\h$ of $V_1,$ vector $w$ satisfies conditions
$h_nw=0$ for all $h\in \h$ and $n\geq 0.$ In the case $V= L_{\wg}(k,0),$ the condition is exactly equivalent to $w\in K(\g,k).$
The rest of the proof is similar to that given in \cite{K} for the branching functions.

The paper is organized as follows: We review basics on vertex operator algebras and their modules in Section 2 including
rationality, $C_2$-cofiniteness and modular transformation results on the trace functions associated to a rational and $C_2$-cofinite vertex operator algebra $V.$
Following \cite{M} and \cite{Kr}, we discuss Section 3 the modular transformation formula for  generalized theta functions $\chi_i(w,v,q)=\tr_{M^i}o(w)e^{2\pi
io(v)}q^{L(0)-c/24}$ where $M^i$ is an irreducible $V$-module and $v\in \h$ and $w$ is defined as before.   In Section 4 we discuss the affine vertex operator algebras $L_{\wg}(k,0)$  and the modular  transformation formula of the specialized characters of  the level $k$   integrable highest weight modules for the the affine Kac-Moody algebra $\wg.$ We recall the various known results on the Parafermion vertex operator algebra $K(\g,k)$ such as rationality and classification of irreducible modules. Section 6 deals with
the modular transformation formula for the 1 point function $Z_{M^{\Lambda,\lambda}}(v,\tau).$ The $T$-matrix is easy and the most effort is on the $S$-matrix.  The idea and method for finding the $S$-matrix is similar to that in \cite{K} and \cite{KP}.

\section{Basics}
\setcounter{equation}{0}
In this section we review the basic  on vertex operator algebra. Let $V=(V,Y,\1,\omega)$ be a vertex operator algebra (cf. \cite{B} and \cite{FLM}).

A vertex operator algebra  $V$ is of {\em CFT type} if $V$ is simple,  with respect to $L(0)$ one has $V=\oplus_{n\geq 0}V_n$ and $V_0=\C {\bf 1}$ \cite{DLMM}.

A vertex operator algebra $V$ is called $C_2$-{\em cofinite} if $\dim V/C_2(V)<\infty$ where
$C_2(V)$ is the subspace of $V$ spanned by $u_{-2}v$ for $u,v\in V$ \cite{Z}.

A {\em weak} $V$-module  $M=(M,Y_M)$ is a vector space equipped with a linear map
$$\begin{array}{ll}
Y_M: & V\to (\mbox{End}\,M)[[z^{-1},z]]\label{map}\\
& v\mapsto\displaystyle{Y_M(v,z)=\sum_{n\in \Z}v_nz^{-n-1}\ \ (v_n\in\mbox{End}\,M)}
\mbox{ for }v\in V\label{1/2}
\end{array}$$
satisfying the following conditions for $u,v\in V$,
$w\in M$:
\begin{eqnarray*}\label{e2.1}
& &v_nw=0\ \ \  				
\mbox{for}\ \ \ n\in \Z \ \ \mbox{sufficiently\ large};\label{vlw0}\\
& &Y_M({\bf 1},z)=1;\label{vacuum}
\end{eqnarray*}
 \begin{equation*}\label{jacobi}
\begin{array}{c}
\displaystyle{z^{-1}_0\delta\left(\frac{z_1-z_2}{z_0}\right)
Y_M(u,z_1)Y_M(v,z_2)-z^{-1}_0\delta\left(\frac{z_2-z_1}{-z_0}\right)
Y_M(v,z_2)Y_M(u,z_1)}\\
\displaystyle{=z_2^{-1}\delta\left(\frac{z_1-z_0}{z_2}\right)
Y_M(Y(u,z_0)v,z_2)}.
\end{array}
\end{equation*}

An ({\em ordinary}) $V$-module is a  weak $V$-module $M$ which
is $\C$-graded
$$M=\bigoplus_{\lambda \in{\C}}M_{\lambda} $$
such that $\dim M_{\l}$ is finite and $M_{\l+n}=0$
for fixed $\l$ and $n\in {\Z}$ small enough, where
$M_{\l}$ is the eigenspace for $L(0)$ with eigenvalue $\lambda:$
$$L(0)w=\l w, \ \ \ w\in M_{\l}.$$
Let $M$ be an ordinary $V$-module.  We denote $v_{n-1}$ by $o(v)$ for $v\in V_n$ and  extend to $V$ linearly.  Then $o(v)M_{\lambda}\subset M_{\lambda}$ for all $\lambda\in\C.$

An {\em admissible} $V$-module is
a  weak $V$-module $M$ which  carries a
${\Z}_{+}$-grading
$$M=\bigoplus_{n\in {\Z}_{+}}M(n)$$
($\Z_+$ is the set all nonnegative integers) such that if $r, m\in {\Z} ,n\in {\Z}_{+}$ and $a\in V_{r}$
then
$$a_{m}M(n)\subseteq M(r+n-m-1).$$
Note that any ordinary module is an admissible module.

A vertex operator algebra $V$ is called {\em rational} if any admissible
module is a direct sum of irreducible admissible modules \cite{DLM1}.

The following result is proved  in \cite{DLM2}  and \cite{DLM3}.
\begin{thm}\label{DLMrational}  Assume that  $V$ is rational.

(1)  There are only
finitely many inequivalent irreducible admissible modules $V=M^0,...,M^p$
and each irreducible admissible module is an ordinary module. Each $M^i$ has weight space decomposition
$$M^i=\bigoplus_{n\geq 0}M^i_{\lambda_i+n}$$
where $\lambda_i\in\C$ is a complex number such that $M^i_{\lambda_i}\ne 0$ and $M^i_{\lambda_i+n}$ is the eigenspace of $L(0)$ with eigenvalue $\lambda_i+n.$ The $\lambda_i$ is called the weight of $M^i.$

(2) If $V$ is both rational and
$C_2$-cofinite, then $\lambda_i$ and central charge $c$ are rational numbers.
\end{thm}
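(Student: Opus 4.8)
The plan is to deduce (1) from Zhu's theory of the associative algebra $A(V)$ and to deduce (2) from the modular differential equation forced by $C_2$-cofiniteness. For (1), I would work with Zhu's algebra $A(V)=V/O(V)$, where $O(V)$ is spanned by the elements $\Res_z\frac{(1+z)^{\wt u}}{z^2}Y(u,z)v$ and the product is $u*v=\Res_z\frac{(1+z)^{\wt u}}{z}Y(u,z)v$. The structural input is the top-level functor $M\mapsto M(0)$, which turns an admissible module into an $A(V)$-module via $[v]\mapsto o(v)$ and induces a bijection between isomorphism classes of irreducible admissible $V$-modules and of irreducible $A(V)$-modules. The first step is to show that rationality of $V$ forces $A(V)$ to be a finite-dimensional semisimple associative algebra. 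Granting this, a finite-dimensional semisimple algebra has only finitely many inequivalent irreducible modules, each finite-dimensional; pulling back along the bijection yields the finite list $M^0,\dots,M^p$ with $\dim M^i(0)<\infty$.

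Next I would pin down the grading. The image $\bar\omega$ of $\omega$ in $A(V)$ is central and acts on $M(0)$ as $o(\omega)=L(0)$. Since $A(V)$ is semisimple, a central element acts as a scalar on each irreducible module, so $L(0)$ acts on $M^i(0)$ by a scalar $\lambda_i$. Using the admissibility relation $a_mM(n)\subseteq M(\wt a+n-m-1)$ together with $[L(0),v_m]=(\wt v-m-1)v_m$, one checks inductively that $L(0)$ acts on $M^i(n)$ as $\lambda_i+n$. Hence $M^i=\bigoplus_{n\geq 0}M^i_{\lambda_i+n}$ with finite-dimensional homogeneous pieces, i.e. every irreducible admissible module is ordinary, which completes (1).

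For (2) the extra hypothesis of $C_2$-cofiniteness is essential, and the plan is to exploit modularity of the characters. Writing $\chi_i(\tau)=\tr_{M^i}q^{L(0)-c/24}$, I would first establish that the finite-dimensional span of the $\chi_i$ is the solution space of a monic modular-linear differential equation $\sum_j g_j(\tau)\,D^jf=0$ whose coefficients $g_j$ are holomorphic $SL_2(\Z)$-modular forms with rational Fourier coefficients, where $D$ denotes the Serre derivative. Such an equation exists because $C_2$-cofiniteness bounds the dimension of the space of genus-one one-point functions and makes it $SL_2(\Z)$-stable, forcing the required linear dependence; this is the mechanism of Zhu and of Anderson--Moore. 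The indicial equation of this Fuchsian ODE at the cusp $q=0$ then has as its exponents precisely the numbers $\lambda_i-c/24$.

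The final step, and the one I expect to be the main obstacle, is to upgrade ``algebraic exponents'' to ``rational exponents''. The solution space is $SL_2(\Z)$-invariant, and on it $\tau\mapsto\tau+1$ acts by the diagonal matrix with entries $e^{2\pi i(\lambda_i-c/24)}$; however, since $T$ has infinite order in $SL_2(\Z)$, the group relations alone do not force these eigenvalues to be roots of unity. The genuinely arithmetic input is that each $\chi_i$ has a $q$-expansion $q^{\lambda_i-c/24}\sum_{n\geq 0}\dim M^i_{\lambda_i+n}\,q^n$ with non-negative integer coefficients; a component of a vector-valued modular form for $SL_2(\Z)$ with such a bounded-denominator $q$-expansion must have rational leading exponent. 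Applying this to each $\chi_i$ gives $\lambda_i-c/24\in\Q$, and taking the vacuum component $M^0=V$ (with $\lambda_0=0$) yields $c/24\in\Q$, whence all $\lambda_i\in\Q$ as well. Making the construction of the modular differential equation and the bounded-denominator argument rigorous is the crux of the proof.
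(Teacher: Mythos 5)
The paper itself offers no proof of this theorem: it is quoted from the literature, part (1) being the untwisted case of the main theorem of \cite{DLM2} and part (2) being proved in \cite{DLM3}. Measured against those sources, your outline follows the same two routes: Zhu's algebra $A(V)$ for (1), and a modular linear differential equation forced by $C_2$-cofiniteness plus the Anderson--Moore integrality argument for (2). Your treatment of (2) is structurally sound and matches \cite{DLM3}: you correctly observe that the $T$-eigenvalue relations alone cannot force rationality (since $T$ has infinite order), and that the decisive arithmetic input is that each character has a $q$-expansion with non-negative integer coefficients; deducing $c/24\in\Q$ from the vacuum component and then $\lambda_i\in\Q$ is exactly how that proof concludes. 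You leave the key lemma unproved, but you flag it explicitly as the crux, which is fair for a plan.

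Part (1), however, contains a genuine gap. Your argument yields (i) finitely many irreducibles $M^i$ with $\dim M^i(0)<\infty$, because the top level is an irreducible module over the finite-dimensional semisimple algebra $A(V)$, and (ii) that $L(0)$ acts on $M^i(n)$ as $\lambda_i+n$. From this you conclude that $M^i=\bigoplus_{n\geq 0}M^i_{\lambda_i+n}$ has \emph{finite-dimensional} homogeneous pieces, i.e.\ that every irreducible admissible module is ordinary. That last step does not follow: the paper's definition of an ordinary module requires $\dim M_\lambda<\infty$, and nothing you invoke bounds $\dim M^i(n)$ for $n\geq 1$. The algebra $A(V)$ controls only the degree-zero piece; $M^i(n)$ is a quotient of the degree-$n$ piece of the Verma (induced) module built on $M^i(0)$, which is spanned by modes of the infinitely many elements of $V$ applied to $M^i(0)$ and is infinite-dimensional in general, and irreducibility together with the $L(0)$-eigenvalue computation gives no control over it. This is precisely the delicate point in the literature: it is handled either by the higher Zhu algebras $A_n(V)$ of Dong, Li and Mason (rationality makes each $A_n(V)$ finite-dimensional semisimple, and the graded pieces $M^i(m)$, $m\leq n$, are $A_n(V)$-modules, irreducible when nonzero, which is what bounds their dimensions), or by the contragredient/pairing argument used in \cite{DLM2}. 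A smaller remark of the same kind: your first step, that rationality forces $A(V)$ to be finite-dimensional semisimple, is asserted rather than argued (the standard proof uses induced admissible modules, complete reducibility, and Artin--Wedderburn), but you at least identify it as something to be shown, whereas the finiteness of the higher graded pieces is treated as automatic, and it is not.
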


In the rest of this  paper we assume that $V$ is a strong rational vertex operator algebra. That is $V$ satisfies  the following:
\begin{enumerate}
\item[(V1)] $V=\oplus_{n\geq 0}V_n$ is a simple vertex operator algebra  of CFT type,
\item[(V2)] $V$ is $C_{2}$-cofinite and  rational,
\item[(V3)] The conformal weights $\lambda_i$ are nonnegative and $\lambda_i=0$ if and only if $i=0.$
\end{enumerate}

Using the assumption (V3) we know that $V$ and its contragredient module $V'$ \cite{FHL} are isomorphic $V$-modules.
From \cite{DM2} we know $V_1$ is a reductive Lie algebra with $[u,v]=u_0v$ for $u,v\in V_1.$ Moreover, the rank $V_1$ is less than or equal to the central charge $c$ of $V.$ For any $u\in V_1,$ $u_0$ is a derivation of $V$ in the sense that
$[u_0,Y(v,z)]=Y(u_0v,z)$ for any $v\in V$ and $u_0\omega=0.$ Furthermore, $e^{u_0}$ is an automorphism of $V.$

Recall from \cite{FHL}
that a bilinear from $(\cdot, \cdot)$ on $V$ is called {\rm invariant} if
\begin{eqnarray*}
(Y(u,z)v,w)=(u, Y(e^{zL(1)}(-z^{-2})^{L(0)}v,z^{-1})w)
\end{eqnarray*}
for $u,v,w\in V.$ From our assumptions  there is unique nondegenerate
invariant bilinear form on $V$ \cite{L1}. We shall fix a bilinear
form $(\cdot,\cdot)$ on $V$ so that $(u,v)=u_1v$ for $u,v\in V_1.$
 It is clear from the definition that
$(gu,gv)=(u,v)$ for any automorphism $g$ and $u,v\in V_1.$

The modular transformations of trace functions of irreducible modules of vertex operator algebras \cite{Z} are of main importance  in this paper. Another vertex operator algebra  structure
$(V,Y[\cdot,z],\1,\omega-c/24)$ is defined on $V$ in \cite{Z} with grading
 $$V=\bigoplus_{n\geq 0}V_{[n]}.$$ For $v\in  V_{[n]}$ we write $\wt [v]=n.$
 For $i=0,...,p$ we set
 $$Z_i(v,q)=\tr_{M^i}o(v)q^{L(0)-c/24}$$
 which is a formal power series in variable $q.$
The $Z_i(\1,q)$ which is also denoted by  $\ch_qM^i$ is called the $q$-character of $M^i.$

The  series $Z_i(v,q)$ converges to a holomorphic function $Z_i(v,\tau)$
on $\H=\{\tau\in \C \mid {\rm Im}\,\tau>0\}$ where
 $q=e^{2\pi i\tau}$ with $\tau\in \H$ \cite{Z}.

Recall the modular group $SL_2(\Z)$ and let $\gamma=\left(\begin{array}{cc} a & b\\ c & d\end{array}\right)\in SL_2(\Z).$
\begin{thm}\label{2.3} Let $V$ be a strong rational  vertex operator algebra.

(1) There is  a group homomorphism $\rho: SL_2(\Z)\to GL_{p+1}(\C)$ with $\rho(\gamma)=(\gamma_{ij})$
such that for any $0\leq i\leq p$ and $v\in V,$
\begin{eqnarray*}
Z_i(v, \frac{a\tau+b}{c\tau+d})=({c\tau+d})^{\wt[v]}\sum_{j=0}^p\gamma_{ij}Z_j(v,\tau).
\end{eqnarray*}

(2) Each $Z_i(v,\tau)$ is a modular form of weight $\wt[v]$ over a congruence subgroup.
 \end{thm}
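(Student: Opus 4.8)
This is Zhu's modular invariance theorem, so my plan follows the strategy of \cite{Z}: prove the transformation law for the \emph{entire} family $\{Z_i(v,\tau)\}_{i=0}^{p}$ at once, by realizing these functions as a basis of a finite-dimensional space of genus-one conformal blocks on which $SL_2(\Z)$ acts. By linearity and the square-bracket grading $V=\oplus_{n\geq 0}V_{[n]}$ I may assume $v\in V_{[n]}$ with $\wt[v]=n$, and since $SL_2(\Z)$ is generated by $T=\left(\begin{smallmatrix}1&1\\0&1\end{smallmatrix}\right)$ and $S=\left(\begin{smallmatrix}0&-1\\1&0\end{smallmatrix}\right)$, it suffices to treat these two elements and then check the defining relations.

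The transformation under $T$ is immediate. Since $V$ is rational and $C_2$-cofinite, Theorem \ref{DLMrational}(2) gives that each conformal weight $\lambda_i$ and the central charge $c$ are rational. As $o(v)$ preserves every $L(0)$-eigenspace and the $L(0)$-eigenvalues on $M^i$ lie in $\lambda_i+\Z$, the factor $q^{L(0)-c/24}$ contributes under $\tau\mapsto\tau+1$ only the uniform phase $e^{2\pi i(\lambda_i-c/24)}$. Because $c\tau+d=1$ for $T$, this yields $\rho(T)=\mathrm{diag}\bigl(e^{2\pi i(\lambda_i-c/24)}\bigr)$, a diagonal matrix visibly independent of $v$.

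The substance of the theorem is the $S$-transformation. Here the plan is to introduce the genus-one $n$-point functions formed from traces over $M^i$ of products of $n$ vertex operators inserted at elliptic points $z_1,\dots,z_n$ against $q^{L(0)-c/24}$, with the one-point functions $Z_i(v,\tau)$ recovered at $n=1$, $z_1=0$. First I would establish, using $C_2$-cofiniteness, that these traces converge to functions holomorphic in $\tau$ and elliptic and meromorphic in the $z_j$, and that they obey the reduction formulas of \cite{Z} expressing an $n$-point function through $(n-1)$-point functions with coefficients assembled from the Weierstrass functions $\wp_k(z,\tau)$ and the Eisenstein series $G_{2k}(\tau)$. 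Combined with $C_2$-cofiniteness, these recursions show that the space $\mathcal{C}$ of genus-one one-point conformal blocks, namely the $\C$-linear functionals on $V$ depending on $\tau$ and satisfying the recursions, is finite-dimensional with $\{Z_i(\cdot,\tau)\}_{i=0}^{p}$ a basis. The crucial point is that $SL_2(\Z)$ acts on $\mathcal{C}$: the modular transformations of $\wp_k$ and $G_{2k}$ are exactly compatible with the reduction formulas, and the passage to the square-bracket vertex operator algebra $(V,Y[\cdot,z],\1,\omega-c/24)$ converts the quasi-modular anomaly of $G_2$ into precisely the automorphy factor $(c\tau+d)^{\wt[v]}$, with no residual term. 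Defining $\rho(S)=(\gamma_{ij})$ to be the matrix of the $S$-action in the basis $\{Z_i\}$ then gives $Z_i(v,-1/\tau)=\tau^{\wt[v]}\sum_j\gamma_{ij}Z_j(v,\tau)$ for all $v$ simultaneously; the independence of $\gamma_{ij}$ from $v$ is automatic, since $\rho(S)$ is defined on the space $\mathcal{C}$ of blocks prior to evaluation at any particular $v$.

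Combining $\rho(S)$ and $\rho(T)$ and verifying the relations of $SL_2(\Z)$ inside $\mathcal{C}$ yields the homomorphism $\rho:SL_2(\Z)\to GL_{p+1}(\C)$ of part (1). I expect the main obstacle to be precisely this $S$-step: controlling the cancellation of the $G_2$-anomaly so that the automorphy factor emerges cleanly as $(c\tau+d)^{\wt[v]}$, and confirming that the finite-dimensional block space is genuinely $SL_2(\Z)$-stable. For part (2), each $Z_i(v,\tau)$ is holomorphic on $\H$ with a $q$-expansion of bounded denominator (by the rationality of $\lambda_i$ and $c$), so $\rho(T)$ has finite order; invoking the congruence-subgroup property of $\rho$, that $\ker\rho$ contains some $\Gamma(N)$, then identifies $Z_i(v,\tau)$ as a modular form of weight $\wt[v]$ on $\Gamma(N)$, with holomorphy at the cusps supplied by the convergence established above.
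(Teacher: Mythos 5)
The paper gives no proof of this theorem at all---it is quoted from the literature, with part (1) attributed to \cite{Z} and part (2) to \cite{DLN}---and your outline is a faithful reconstruction of exactly those sources: Zhu's genus-one conformal-block argument for (1) (diagonal $T$-matrix from rationality of the $\lambda_i$ and $c$; $S$-matrix from the $SL_2(\Z)$-action on the finite-dimensional space of one-point blocks, with the square-bracket structure absorbing the $G_2$-anomaly into the automorphy factor $(c\tau+d)^{\wt[v]}$), and the congruence-subgroup property, which is precisely the main theorem of \cite{DLN}, for (2). The one imprecision worth noting is that the claim that $\{Z_i(\cdot,\tau)\}$ is a \emph{basis} of the block space does not follow from $C_2$-cofiniteness and the recursions alone, as your sketch suggests: $C_2$-cofiniteness gives finite-dimensionality, but the spanning by traces over the irreducible modules requires rationality (semisimplicity of the Zhu algebra $A(V)$; without it one must pass to pseudo-traces)---since rationality is among your standing hypotheses, this is a misattribution rather than a gap.
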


Part (1) of the Theorem was obtained in \cite{Z} and Part (2) was established in \cite{DLN}.
The matrices
$$S=\rho_V\left(\mtx{0 & -1\\ 1 & 0}\right) \text{ and }T=\rho_V\left(\mtx{1 & 1\\ 0 & 1}\right).
$$
are respectively called the \emph{genus one} $S$- and $T$-matrices of $V$.

\section{Modular invariance of the generalized theta functions}
\setcounter{equation}{0}

We review the  modular transformation formula of the theta functions defined on vertex operator algebra given in \cite{M} and \cite{Kr}
for studying the modular invariance of  trace functions for the parafermion vertex operator algebras.

Recall that $V_1$ is a reductive Lie algebra. We fix a Cartan subalgebra $\h$ of $V_1.$ Then the abelian Lie algebra
$\h$ acts on $M^i$ semsimply for all $i.$ Following \cite{M}, we define the generalized theta functions as
$$T_i(v,u,q)=\tr_{M^i}e^{2\pi
i(v_0+(u,v)/2)}q^{L(0)+u_0+(u,u)/2-c/24}$$
for $u,v\in \h.$ The bilinear form on $V_1$ used
in \cite{M} is the negative  of the bilinear form used in this paper.
So our $T_i(v,u,q)$ defined here is exactly the $Z_{M^i}(v,u,q)$ in  \cite{M}. Based on Theorem
\ref{2.3}, a modular transformation law was obtained in \cite{M} with the convergence of $Z_i(v,u,q)$ proved in \cite{DLiuM}.

\begin{thm}\label{m} Let $u,v\in \h$ and
$\gamma=\left(\begin{array}{cc}
a & b\\
c & d
\end{array}\right)\in SL_2(\Z).$ Then $T_i(v,u,q)$
converges to a holomorphic function in the upper half plane with $q=e^{2\pi i\tau}$ and
$$
T_i(v,u,\frac{a\tau+b}{c\tau+d})=\sum_{j=0}^p{\gamma}_{ij}
T_j(dv+bu,cv+au,\tau)$$
where $\gamma_{ij}$ is the same as in Theorem \ref{2.3}.
\end{thm}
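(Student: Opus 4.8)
The plan is to strip the two exponential insertions down to a single one, reduce $T_i$ to a Jacobi-type character, then invoke the modular transformation of that character and match scalar prefactors.

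First I would record that, since $\h$ is abelian and $u_0\omega=0$ forces $[u_0,L(0)]=0$, the operators $u_0,v_0$ and $L(0)$ mutually commute (using $[u_0,v_0]=(u_0v)_0=([u,v])_0=0$). Writing $q^{u_0}=e^{2\pi i\tau u_0}$ we get $e^{2\pi i v_0}q^{u_0}=e^{2\pi i(v+\tau u)_0}$, so pulling the scalars $e^{\pi i(u,v)}$ and $q^{(u,u)/2}$ out of the trace rewrites
$$T_i(v,u,\tau)=e^{\pi i(u,v)}q^{(u,u)/2}\,\Theta_i(v+\tau u,\tau),\qquad \Theta_i(w,\tau):=\tr_{M^i}e^{2\pi i w_0}q^{L(0)-c/24},$$
where $w=v+\tau u\in\h$. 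Convergence of $\Theta_i$ on $\H$, hence of $T_i$, is the content of \cite{DLiuM}, so the analytic part is settled and only the transformation law remains.

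The heart of the argument is the modular behaviour of the \emph{Jacobi character} $\Theta_i$, namely
$$\Theta_i\!\left(\frac{w}{c\tau+d},\frac{a\tau+b}{c\tau+d}\right)=e^{\frac{\pi i c(w,w)}{c\tau+d}}\sum_{j=0}^p\gamma_{ij}\,\Theta_j(w,\tau),$$
with the same matrices $\gamma_{ij}$ as in Theorem \ref{2.3}. Granting this, the matching step is mechanical: set $\tau'=\gamma\tau$, take $w=v+\tau'u$, and put $W=(c\tau+d)w$. A direct computation gives $W=(c\tau+d)v+(a\tau+b)u=(dv+bu)+\tau(cv+au)$, which is exactly the elliptic variable attached to $T_j(dv+bu,cv+au,\tau)$. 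Applying the Jacobi law with $w\mapsto W/(c\tau+d)$ then yields
$$T_i(v,u,\tau')=e^{\pi i(u,v)}(q')^{(u,u)/2}e^{\frac{\pi i c(W,W)}{c\tau+d}}\sum_{j}\gamma_{ij}\Theta_j(W,\tau),$$
and it remains to verify the scalar identity
$$e^{\pi i(u,v)}(q')^{(u,u)/2}e^{\frac{\pi i c(W,W)}{c\tau+d}}=e^{\pi i(cv+au,\,dv+bu)}q^{(cv+au,\,cv+au)/2}.$$
Taking logarithms and expanding $(W,W)$ by bilinearity, the coefficients of $(v,v)$, $(u,u)$ match identically, while the coefficient of $(u,v)$ matches precisely because $ad-bc=1$; this is routine bookkeeping. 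Combining everything gives $T_i(v,u,\tau')=\sum_j\gamma_{ij}T_j(dv+bu,cv+au,\tau)$, as claimed.

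The main obstacle is the displayed Jacobi transformation of $\Theta_i$. Theorem \ref{2.3} controls only a single fixed insertion $o(v)$ with $v\in V$, whereas $\Theta_i$ carries the exponential $e^{2\pi i w_0}$, and $o(w)^n$ is not $o$ of a single vector. Bridging this gap — expanding $\Theta_i$ into its moments $\tr_{M^i}o(w)^nq^{L(0)-c/24}$, transporting each moment through the Zhu recursion underlying Theorem \ref{2.3}, and resumming — is where the Gaussian automorphy factor $e^{\pi i c(w,w)/(c\tau+d)}$ is produced: it originates from the Heisenberg relations $[w_m,w_n]=m(w,w)\delta_{m+n,0}\,\mathrm{id}$ among the modes of $w\in\h$ (which follow from $w_0w=0$ and $w_1w=(w,w)\1$), the same mechanism that makes lattice theta functions into Jacobi forms. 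This is exactly the analysis carried out in \cite{M} and \cite{Kr}, which I would import directly or reproduce along these lines.
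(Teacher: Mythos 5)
The paper offers no proof of Theorem \ref{m} at all: it is imported wholesale from \cite{M} (Miyamoto's two-variable theta functions, with convergence supplied by \cite{DLiuM}), and the one-variable law, Proposition \ref{p1}, is then obtained from it by specializing $u=0$ and replacing $v$ by $v/(c\tau+d)$. Your proposal runs this derivation in the opposite direction: you take the one-variable Jacobi law as input and recover the two-variable statement via the factorization $T_i(v,u,\tau)=e^{\pi i(u,v)}q^{(u,u)/2}\chi_i(v+\tau u,\tau)$, the substitution $W=(dv+bu)+\tau(cv+au)$, and the scalar identity; your algebra is correct throughout (the commutation relations $[u_0,v_0]=[u_0,L(0)]=0$ are justified exactly as you say, and the scalar identity does reduce, coefficient by coefficient in $(v,v)$, $(u,v)$, $(u,u)$, to $ad-bc=1$). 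What this buys is a point the paper never makes explicit: the two-variable transformation law and the rescaled one-variable law are elementarily equivalent. Two caveats, though. First, within this paper's logical structure Proposition \ref{p1} is a \emph{consequence} of Theorem \ref{m}, so you cannot cite it without circularity; you must source the Jacobi law independently, e.g. as the $w=\1$ case of Krauel's Theorem \ref{gm} or from \cite{DLiuM}, and you correctly identify this as the main obstacle rather than sweeping it under the rug. Second, the analysis of \cite{M}, \cite{Kr} that you propose to "reproduce" for the one-variable law in fact produces the two-variable law natively — the $u_0$-shift in the grading is forced when the insertion $e^{2\pi iv_0}$ is transported around the torus — so reproducing it is not genuinely easier than proving Theorem \ref{m} outright. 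In the end your proof and the paper's rest on the same imported black box; your added value is the correct and clean equivalence between the two formulations.
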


Set $\chi_i(v,\tau)=T_i(v,0, \tau).$ Using Theorem \ref{m}  one can  easily   show the following result \cite{DLiuM}.
\begin{prop} \label{p1} Assume that $V$ is a strong rational vertex operator algebra.
Then for $\gamma=\left (\begin{array}{l} a \quad b\\
c \quad d
\end{array}  \right ) \in SL_2(\Z),$ $v\in \h$
$$\chi_s({v \over c \tau + d}, { a \tau + b \over c \tau +d}) =
e^{\pi i ( c (v,v)/ (c \tau +
d))}\sum_{j=0}^p\gamma_{sj}\chi_j(v,\tau).$$
\end{prop}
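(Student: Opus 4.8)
The plan is to derive Proposition \ref{p1} from Theorem \ref{m} by specializing the two vector arguments of $T_i$ to well-chosen constants. Since $\chi_j(v,\tau)=T_j(v,0,\tau)$, I want the right-hand side of Theorem \ref{m} to collapse to $\sum_j\gamma_{sj}T_j(v,0,\tau)$. Writing the constants fed into Theorem \ref{m} as $\beta,\alpha\in\h$, that right-hand side is $\sum_j\gamma_{sj}T_j(d\beta+b\alpha,c\beta+a\alpha,\tau)$, so I impose $d\beta+b\alpha=v$ and $c\beta+a\alpha=0$. Using $ad-bc=1$ this linear system has the unique solution $\beta=av$, $\alpha=-cv$. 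Thus Theorem \ref{m}, applied with these constants, gives
\[
T_s\Bigl(av,-cv,\tfrac{a\tau+b}{c\tau+d}\Bigr)=\sum_{j=0}^p\gamma_{sj}\chi_j(v,\tau).
\]

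It then remains to identify the left-hand side with a scalar multiple of $\chi_s\bigl(v/(c\tau+d),\frac{a\tau+b}{c\tau+d}\bigr)=T_s\bigl(v/(c\tau+d),0,\frac{a\tau+b}{c\tau+d}\bigr)$. I would do this directly from the definition $T_i(\beta,\alpha,q)=\tr_{M^i}e^{2\pi i(\beta_0+(\alpha,\beta)/2)}q^{L(0)+\alpha_0+(\alpha,\alpha)/2-c/24}$, setting $q'=e^{2\pi i(a\tau+b)/(c\tau+d)}$. Because the zero mode is linear in its argument, $(av)_0=av_0$ and $(-cv)_0=-cv_0$, and since $(q')^{\alpha_0}=e^{2\pi i\frac{a\tau+b}{c\tau+d}\alpha_0}$, the part of the exponent linear in $v_0$ reorganizes as $e^{2\pi i\,v_0\,(a-c\frac{a\tau+b}{c\tau+d})}$. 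The single computation
\[
a-c\,\frac{a\tau+b}{c\tau+d}=\frac{a(c\tau+d)-c(a\tau+b)}{c\tau+d}=\frac{ad-bc}{c\tau+d}=\frac{1}{c\tau+d}
\]
then shows that this equals $e^{2\pi i\,v_0/(c\tau+d)}$, i.e.\ exactly the operator occurring in $\chi_s\bigl(v/(c\tau+d),\frac{a\tau+b}{c\tau+d}\bigr)$.

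Finally I collect the remaining scalar factors. These are $e^{2\pi i(\alpha,\beta)/2}=e^{-\pi i\,ac(v,v)}$ and $(q')^{(\alpha,\alpha)/2}=e^{\pi i\frac{a\tau+b}{c\tau+d}c^2(v,v)}$, whose product equals $e^{\pi i c(v,v)(c\frac{a\tau+b}{c\tau+d}-a)}=e^{-\pi i c(v,v)/(c\tau+d)}$ by the same identity. Since the $L(0)$-grading factor (including the central-charge shift) is common to both traces, these computations give $T_s\bigl(av,-cv,\frac{a\tau+b}{c\tau+d}\bigr)=e^{-\pi i c(v,v)/(c\tau+d)}\,\chi_s\bigl(v/(c\tau+d),\frac{a\tau+b}{c\tau+d}\bigr)$. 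Combining this with the displayed consequence of Theorem \ref{m} and moving the exponential to the other side yields precisely the assertion of Proposition \ref{p1}. The content here is entirely formal: convergence and the modular law are already supplied by Theorem \ref{m}, so the only real work — and the place to be careful — is the bookkeeping of the linear and quadratic exponential prefactors and the recognition that the substitution $\beta=av$, $\alpha=-cv$ is exactly what converts the shift in the $q$-exponent produced by $\alpha_0$ into the shift $v\mapsto v/(c\tau+d)$ of the character variable.
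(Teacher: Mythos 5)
Your proposal is correct and follows essentially the same route the paper intends: the paper gives no detailed argument, stating only that the Proposition follows from Theorem \ref{m} (citing the reference [DLiuM]), and your substitution $\beta=av$, $\alpha=-cv$ together with the careful bookkeeping of the linear and quadratic exponential factors (using $ad-bc=1$ to get $a-c\,\tfrac{a\tau+b}{c\tau+d}=\tfrac{1}{c\tau+d}$) is exactly the omitted derivation. All the scalar factors check out, so nothing further is needed.
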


 As usual we set $o(w)=w_{\wt w-1}$ for homogeneous $w\in V$ and extend to whole $V.$ Now take $w\in V$ such that $h_nw=0$ for all $h\in \h$ and $n\geq 0.$ Also define
$$\chi_i(w,v,q)=\tr_{M^i}o(w)e^{2\pi
io(v)}q^{L(0)-c/24}.$$
The following result \cite{Kr} generalizes  Proposition  \ref{p1}.
\begin{thm}\label{gm}
Let $v\in \h.$ We assume that $\chi_i(w, v,q)$ converges to a holomorphic function  $\chi_i(w, v,\tau)$
in $\H$ with $q=e^{2\pi i\tau}$ for any $v\in\h.$  Then  for
 $\gamma=\left(\begin{array}{cc}
a & b\\
c & d
\end{array}\right)\in SL_2(\Z),$
$$
\chi_i(w, \frac{v}{c\tau+d},\frac{a\tau+b}{c\tau+d})=(c\tau+d)^{\wt[v]}e^{\frac{\pi i  c (v,v)}{ c \tau +
d}}\sum_{j=0}^p{\gamma}_{ij}
\chi_j(w, v,\tau).$$
\end{thm}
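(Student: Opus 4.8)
The plan is to deduce the formula from a $w$-decorated version of Theorem \ref{m}, specialized in exactly the way Proposition \ref{p1} is obtained from Theorem \ref{m}. First I would introduce the decorated generalized theta function
$$T_i(w,v,u,q)=\tr_{M^i}o(w)\,e^{2\pi i(v_0+(u,v)/2)}q^{L(0)+u_0+(u,u)/2-c/24},\qquad u,v\in\h,$$
so that $T_i(\1,v,u,q)=T_i(v,u,q)$ and $\chi_i(w,v,q)=T_i(w,v,0,q)$. The main step is to prove that this function transforms by
$$T_i\!\left(w,v,u,\frac{a\tau+b}{c\tau+d}\right)=(c\tau+d)^{\wt[w]}\sum_{j=0}^{p}\gamma_{ij}\,T_j(w,dv+bu,cv+au,\tau),$$
that is, precisely the law of Theorem \ref{m} with a single extra weight factor $(c\tau+d)^{\wt[w]}$ contributed by the inserted field $o(w)$ (taking $w$ homogeneous for the square-bracket grading, the general case following by linearity).

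The feature that lets the insertion be carried through is the hypothesis $h_nw=0$ for all $h\in\h$ and $n\geq0$. Because every $u\in\h\subset V_1$ acts as a derivation of $V$, one has $[u_0,o(w)]=o(u_0w)=0$, and similarly $v_0w=0$; hence $o(w)$ commutes with $v_0$ and $u_0$, and therefore with $e^{2\pi iv_0}$ and with $q^{u_0}$. Thus in the trace defining $T_i(w,v,u,q)$ the operator $o(w)$ is a genuine spectator with respect to every Heisenberg decoration. Reproducing the proof of Theorem \ref{m} in \cite{M} and its extension in \cite{Kr}, where the factor $e^{2\pi iv_0}$ and the shifted grading $L(0)+u_0+(u,u)/2$ are realized through the spectral flow (Li's $\Delta$-operator) on $M^i$ together with Zhu's recursion, the commuting insertion $o(w)$ passes unchanged through each step, while Theorem \ref{2.3} supplies the weight factor $(c\tau+d)^{\wt[w]}$ attached to $o(w)$. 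Convergence of $\chi_i(w,v,q)$ in $\H$ is assumed in the hypothesis, so no separate analytic estimate is required. Verifying that none of the holomorphy and recursion arguments behind Theorem \ref{m} are disturbed by the insertion is the hard part; it is exactly guaranteed by the commutativity just noted, and is the content of \cite{Kr}.

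Granting the decorated law, I would finish by specialization. Setting $u=0$ and replacing $v$ by $v/(c\tau+d)$ gives
$$\chi_i\!\left(w,\frac{v}{c\tau+d},\frac{a\tau+b}{c\tau+d}\right)=(c\tau+d)^{\wt[w]}\sum_{j=0}^{p}\gamma_{ij}\,T_j\!\left(w,\frac{dv}{c\tau+d},\frac{cv}{c\tau+d},\tau\right).$$
Writing $v'=\frac{d}{c\tau+d}v$ and $u'=\frac{c}{c\tau+d}v$, the commuting elements $v',u'\in\h$ satisfy $e^{2\pi iv'_0}q^{u'_0}=e^{2\pi i(v'+\tau u')_0}=e^{2\pi iv_0}$ since $v'+\tau u'=v$; as $o(w)$ commutes with $v'_0$ and $u'_0$, this yields $T_j(w,v',u',\tau)=e^{\pi i(u',v')}q^{(u',u')/2}\chi_j(w,v,\tau)$. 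Finally $(u',v')=\frac{cd}{(c\tau+d)^2}(v,v)$ and $(u',u')=\frac{c^2}{(c\tau+d)^2}(v,v)$, so with $q=e^{2\pi i\tau}$ the scalar collapses to $e^{\pi i c(v,v)/(c\tau+d)}$, giving the asserted transformation. This is the same bookkeeping that converts Theorem \ref{m} into Proposition \ref{p1}, now carrying the spectator $o(w)$ and the weight factor throughout.
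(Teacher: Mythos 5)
There is no internal proof to compare your proposal against: the paper states Theorem \ref{gm} as a quoted result of \cite{Kr}, with no argument given, just as Theorem \ref{m} is quoted from \cite{M}. Your route --- a $w$-decorated form of the two-variable law of Theorem \ref{m}, followed by the same specialization $u=0$, $v\mapsto v/(c\tau+d)$ that converts Theorem \ref{m} into Proposition \ref{p1} --- does match the strategy behind \cite{Kr}, and your final bookkeeping is correct: with $v'=\frac{d}{c\tau+d}v$ and $u'=\frac{c}{c\tau+d}v$ one has $v'+\tau u'=v$ and $(u',v')+\tau(u',u')=\frac{c(v,v)}{c\tau+d}$, which yields the stated scalar. (Your exponent $(c\tau+d)^{\wt[w]}$ is also the right one; the $\wt[v]$ in the displayed statement of Theorem \ref{gm} is a slip, as the use of the theorem in step (b) of the proof of Theorem \ref{mthm} confirms.)

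The genuine gap is in your justification of the decorated law, which is the entire content of the theorem. You assert that the insertion $o(w)$ rides through the proof of Theorem \ref{m} because it commutes with $v_0$ and $u_0$, and that this ``is exactly guaranteed by the commutativity just noted.'' Commutativity uses only $h_0w=0$, and under that weaker hypothesis the theorem is false. Take $w=u\in\h$ itself: then $o(w)=u_0$ commutes with every Heisenberg decoration, and $\chi_i(u,v,\tau)=\frac{1}{2\pi i}D_u\chi_i(v,\tau)$ is a directional derivative of the function in Proposition \ref{p1}. Differentiating the transformation law of Proposition \ref{p1} in the direction $u$ gives
$$
\chi_i\Bigl(u,\frac{v}{c\tau+d},\frac{a\tau+b}{c\tau+d}\Bigr)=(c\tau+d)\,e^{\frac{\pi i c(v,v)}{c\tau+d}}\sum_{j=0}^p\gamma_{ij}\Bigl(\chi_j(u,v,\tau)+\frac{c\,(u,v)}{c\tau+d}\,\chi_j(v,\tau)\Bigr),
$$
an inhomogeneous law with an extra term proportional to $c(u,v)$, not the clean law asserted in Theorem \ref{gm}. (Consistently, $w=u$ violates the actual hypothesis exactly at $n=1$: $h_1u=(h,u)\1\neq 0$.) What the full condition $h_nw=0$ for all $n\geq 0$ really buys is that Li's operator $\Delta(u,z)=z^{u_0}\exp\bigl(\sum_{n\geq 1}\frac{u_n}{-n}(-z)^{-n}\bigr)$ fixes $w$; this is what identifies the decorated, $o(w)$-inserted trace on $M^i$ with the plain $o(w)$-inserted trace on the $\Delta(u)$-twisted module, and only then can Zhu's Theorem \ref{2.3} (with insertion $o(w)$) be applied to those modules to produce the factor $(c\tau+d)^{\wt[w]}$. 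So your specialization step stands, but the ``hard part'' you defer to \cite{Kr} is not a routine check that a commuting insertion is a spectator: as written, your sketch would establish the theorem in a generality in which it fails.
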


\section{Affine  vertex operator algebras}
\setcounter{equation}{0}

In this section we discuss the affine vertex operator algebra  $L_{\wg}(k,0)$  associated to the level $k$ integrable highest weight module for affine Kac-Moody algebra $\wg$ and its irreducible modules.

Let $\g$ be a finite dimensional simple Lie algebra  with a Cartan
subalgebra $\h.$ We denote the corresponding root system by $\Delta$
and the root lattice by $Q.$ Fix an invariant symmetric
nondegenerate bilinear form $\< ,\>$  on $\g$ such that $\<\a,\a\>=2$ if
$\alpha$ is a long root, where we have identified $\h$ with $\h^*$
via $\<,\>.$ We denote the image of $\alpha\in
\h^*$ in $\h$ by $t_\alpha.$ That is, $\alpha(h)=\<t_\alpha,h\>$
for any $h\in\h.$ Fix simple roots $\{\alpha_1,...,\alpha_l\}$
and let $\Delta_+$ be the set of corresponding positive roots.
Denote the highest root by $\theta,$  and the Weyl group by $W.$  Also let $\rho$ be the half sum of positive roots.

Recall that the weight lattice $P$ of $\g$ consists of $\lambda\in \h^*$ such that
$\frac{2\<\lambda,\alpha\>}{\<\alpha,\alpha\>}\in\Z$ for all $\alpha\in \Delta.$ It is well-known that
$P=\bigoplus_{i=1}^l\Z\Lambda_i$ where $\Lambda_i$ are the fundamental weights defined by the equation
$\frac{2\<\Lambda_i,\alpha_j\>}{\<\alpha_j,\alpha_j\>}=\delta_{i,j}$ for $1\leq i,j\leq l.$ Let $P_+$ be the subset
of $P$ consisting of the dominant weight $\Lambda\in P$ in the sense that $\frac{2\<\Lambda,\alpha_j\>}{\<\alpha_j,\alpha_j\>}$ is nonnegative for all $j.$ For any nonnegative
integer $k$ we also let $P_+^k$ be the subset of $P_+$ consisting of $\Lambda$ satisfying $\<\Lambda,\theta\>
\leq k.$

Let $Q=\sum_{i=1}^l\Z\alpha_i$ be the root lattice and $Q_L$ be the sublattice of $Q$ spanned by the long roots. Recall that the dual lattice $Q_L^{\circ}$ consists $\lambda\in \h^*$ such that $\<\lambda,\alpha\>\in\Z$ for
all $\alpha\in Q_L.$ Then $P$ is the dual lattice of $Q_L$ \cite{ADJR}.

Let $\wg=\g\otimes \C[t,t^{-1}]\oplus \C K$ be the affine Lie algebra. Fix a nonnegative integer $k.$ For any $\Lambda\in P_+^k$ let $L(\Lambda)$ be the irreducible highest weight $\g$-module with highest weight $\Lambda$
and $L_{\wg}(k,\Lambda)$ be the unique irreducible $\wg$-module such that $L_{\wg}(k,\Lambda)$ is generated by $L(\Lambda)$ and $\g\otimes t^n L(\Lambda)=0$ for $t>0$ and $K$ acts as constant $k.$ The following result is well known
(cf. \cite{FZ},\cite{LL}):
\begin{thm}\label{affinevoa} The $L_{\wg}(k,0)$  is a simple, rational and $C_2$-cofinite  vertex operator algebra with central charge
$\frac{k\dim{\frak g}}{k+h^{\vee}}$ and  whose irreducible modules
are $L_{\wg}(k,\Lambda)$ for $\Lambda\in P_+^k$ and the weight $n_{\Lambda}$  of $L_{\wg}(k,\Lambda)$ is
$\frac{\<\Lambda+2\rho, \Lambda\>}{2(k+h^{\vee})}$ where  $\rho=\sum_{i=1}^l\Lambda_i$ and
$h^{\vee}$ is the dual Coxeter number.
\end{thm}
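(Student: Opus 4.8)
The plan is to construct the vertex operator algebra structure on the level $k$ vacuum module and then extract the remaining assertions from the representation theory of $\wg$. First I would form the generalized Verma module $V_{\wg}(k,0)$, the $\wg$-module induced from the trivial $\g$-module at level $k$; following \cite{FZ} and \cite{LL} this carries a natural vertex operator algebra structure whose state-field correspondence is determined by $Y(a(-1)\1,z)=\sum_{n\in\Z}(a\otimes t^n)z^{-n-1}$ for $a\in\g$, where $a(n)$ denotes the action of $a\otimes t^n$. The conformal vector is supplied by the Sugawara construction: fixing dual bases $\{a^i\}$, $\{b^i\}$ of $\g$ with $\<a^i,b^j\>=\delta_{ij}$, one sets $\omega=\frac{1}{2(k+h^{\vee})}\sum_i a^i(-1)b^i(-1)\1$, which is well defined because $k+h^{\vee}>0$ for positive integral $k$. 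A direct computation of the bracket of the resulting Virasoro modes gives central charge $\frac{k\dim\g}{k+h^{\vee}}$. Passing to the unique simple quotient $L_{\wg}(k,0)$ yields a simple vertex operator algebra of CFT type with this central charge.

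For the classification of irreducible modules, I would invoke Kac's theorem that the integrable highest weight $\wg$-modules of level $k$ are exactly the $L_{\wg}(k,\Lambda)$ with $\Lambda\in P_+^k$, that is, the dominant $\Lambda$ satisfying $\<\Lambda,\theta\>\leq k$. The matching between irreducible admissible $L_{\wg}(k,0)$-modules and these integrable modules follows from Frenkel--Zhu theory via Zhu's algebra: the associative algebra $A(V_{\wg}(k,0))$ is isomorphic to $U(\g)$, and passing to the simple quotient imposes the single relation coming from the singular vector $e_{\theta}(-1)^{k+1}\1$, cutting $A(L_{\wg}(k,0))$ down to the quotient of $U(\g)$ whose finite-dimensional irreducible modules are precisely the $L(\Lambda)$ with $\<\Lambda,\theta\>\leq k$. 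Zhu's correspondence then identifies the irreducible $L_{\wg}(k,0)$-modules with the $L_{\wg}(k,\Lambda)$ for $\Lambda\in P_+^k$.

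The conformal weight of $L_{\wg}(k,\Lambda)$ is computed by evaluating $L(0)$ on the top level $L(\Lambda)$. Since the Sugawara $L(0)$ is expressed through the Casimir operator of $\g$, its eigenvalue on $L(\Lambda)$ is $\frac{\<\Lambda+2\rho,\Lambda\>}{2(k+h^{\vee})}$, which is the stated weight $n_{\Lambda}$, with $\rho=\sum_{i=1}^l\Lambda_i$ and $h^{\vee}$ the dual Coxeter number. The genuinely hard input, and the main obstacle, is rationality together with $C_2$-cofiniteness. For $C_2$-cofiniteness I would use that the singular vector $e_{\theta}(-1)^{k+1}\1$ vanishes in $L_{\wg}(k,0)$, which forces enough relations in $L_{\wg}(k,0)/C_2(L_{\wg}(k,0))$ to bound its dimension; rationality, the complete reducibility of every admissible module, is the deepest point and rests on the semisimplicity of the category of integrable level $k$ modules, established in this vertex operator algebra framework through the results cited in \cite{FZ} and \cite{LL}. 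These two properties place $L_{\wg}(k,0)$ in the setting of Theorem \ref{DLMrational} and complete the proof.
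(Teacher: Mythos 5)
The paper gives no proof of this theorem at all, stating it as well known and citing \cite{FZ} and \cite{LL}; your proposal correctly reconstructs the standard argument from precisely those references: the Sugawara conformal vector on the vacuum module with central charge $\frac{k\dim\g}{k+h^{\vee}}$, the Zhu-algebra identification $A(V_{\wg}(k,0))\cong U(\g)$ together with the singular vector $e_{\theta}(-1)^{k+1}\1$ to classify the irreducibles as $L_{\wg}(k,\Lambda)$, $\Lambda\in P_+^k$, the Casimir computation of the weights $\frac{\<\Lambda+2\rho,\Lambda\>}{2(k+h^{\vee})}$, and the singular-vector/integrability input for $C_2$-cofiniteness and rationality. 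This is essentially the same route the paper delegates to its citations, so your outline stands as a correct proof sketch.
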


Note that $L_{\wg}(k,\Lambda)$ has a decomposition with respect to the action of $\h:$
\begin{equation}\label{e4.1}
L_{\wg}(k,\Lambda)=\oplus_{\lambda\in\Lambda+Q}L_{\wg}(k,\Lambda)(\lambda)
\end{equation}
where $h(0)$ acts on $L_{\wg}(k,\Lambda)(\lambda)$ as constant $\lambda(h)=\<\lambda,h\>$ for $h\in \h.$
Following \cite{K}, define the character
$$\chi_{\Lambda}(h,\tau)=\tr_{L_{\wg}(k,\Lambda)}e^{2\pi i h(0)}q^{L(0)-c/24}$$
for $h\in \h.$
Note that the character defined in \cite{K} has an extra factor $e^{2\pi i ku}$  with $u$ being a complex number. This extra factor
makes the modular transformation formula more beautiful. But from the point of view of vertex operator algebra, this extra factor is not necessary.   From \cite{K} we have
\begin{thm}\label{k} Let $\g$ and $k$ be as before.

(1) $ \{\chi_{\Lambda}(h,\tau)|\Lambda\in P_+^k\}$ are linearly independent functions on $\h\times \H.$

(2) For $\Lambda,\Lambda'\in P_+^k,$  set
 $$S_{\Lambda,\Lambda'}=i^{|\Delta_+|}|P/(k+h^{\vee})Q_L|^{-1/2}\sum_{w\in W}(-1)^{l(w)}e^{-\frac{2\pi i}{k+h^{\vee}}\<w(\Lambda+\rho), \Lambda'+\rho\>}$$
where  $l(w)$ is the length of $w.$  Then
$$\chi_{\Lambda}(\frac{h}{\tau},\frac{-1}{\tau})=e^{\pi ik\<h,h\>/\tau} \sum_{\Lambda'\in P_+^k}S_{\Lambda,\Lambda'}\chi_{\Lambda'}(h,\tau).$$
\end{thm}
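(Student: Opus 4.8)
The plan is to derive the formula from the Weyl--Kac character formula, which presents $\chi_\Lambda$ as a ratio of alternating sums of classical theta functions, and then to transform numerator and denominator separately using the modular behaviour of theta functions. Set $m=k+h^\vee.$ For $\mu\in P$ and $n>0$ write $\Theta^{(n)}_\mu(h,\tau)$ for the level-$n$ theta function attached to the lattice $Q_L$; it is supported on $\mu+nQ_L,$ depends only on $\mu$ modulo $nQ_L,$ and its $\h$-specialization carries the variable $h.$ Specializing the Weyl--Kac character formula to $e^{2\pi i h(0)}q^{L(0)-c/24}$ expresses
$$\chi_\Lambda(h,\tau)=\frac{\sum_{w\in W}(-1)^{l(w)}\Theta^{(m)}_{w(\Lambda+\rho)}(h,\tau)}{\sum_{w\in W}(-1)^{l(w)}\Theta^{(h^\vee)}_{w\rho}(h,\tau)},$$
so the numerator is built from level-$m$ theta functions indexed by $\Lambda+\rho$ and the denominator from level-$h^\vee$ theta functions indexed by $\rho.$ Part (1) is then immediate: the $\Theta^{(m)}_\mu$ for distinct classes $\mu\bmod mQ_L$ have disjoint exponent supports and so are linearly independent, the map $\Lambda\mapsto\Lambda+\rho$ carries $P_+^k$ into distinct regular $W$-orbits modulo $mQ_L,$ and the denominator is common to all $\chi_\Lambda,$ so a linear relation among the $\chi_\Lambda$ forces one among the numerators and hence vanishing coefficients.

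For part (2) I would first recall the $S$-transformation of the classical theta functions, obtained by Poisson summation on $Q_L$ (whose dual lattice is $P$): for every $\mu$ and $n,$
$$\Theta^{(n)}_\mu\!\left(\tfrac{h}{\tau},\tfrac{-1}{\tau}\right)=(-i\tau)^{l/2}\,e^{\pi i n\langle h,h\rangle/\tau}\,|P/nQ_L|^{-1/2}\sum_{\nu\in P/nQ_L}e^{-\frac{2\pi i}{n}\langle\mu,\nu\rangle}\,\Theta^{(n)}_\nu(h,\tau).$$
Applying this with $n=m$ to each numerator term and with $n=h^\vee$ to each denominator term, the prefactor $(-i\tau)^{l/2}$ is the same for numerator and denominator (both of rank $l$) and cancels in the ratio, while the Gaussian prefactors combine to $e^{\pi i(m-h^\vee)\langle h,h\rangle/\tau}=e^{\pi i k\langle h,h\rangle/\tau},$ exactly the automorphy factor in the theorem.

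It remains to re-organize the two transformed alternating sums. In the numerator I would regroup the $\nu$-sum into $W$-orbits: the non-regular classes cancel by antisymmetry, each regular class has a unique dominant representative $\Lambda'+\rho$ with $\Lambda'\in P_+^k,$ and after the substitution $\langle w(\Lambda+\rho),w'(\Lambda'+\rho)\rangle=\langle (w')^{-1}w(\Lambda+\rho),\Lambda'+\rho\rangle$ the double alternating sum factors, reassembling $\sum_{w'}(-1)^{l(w')}\Theta^{(m)}_{w'(\Lambda'+\rho)}$ (the numerator of $\chi_{\Lambda'}$) weighted by $\sum_{w}(-1)^{l(w)}e^{-\frac{2\pi i}{m}\langle w(\Lambda+\rho),\Lambda'+\rho\rangle}.$ Dividing by the transformed denominator then yields $\sum_{\Lambda'\in P_+^k}S_{\Lambda,\Lambda'}\chi_{\Lambda'}(h,\tau)$ with $S_{\Lambda,\Lambda'}$ of the asserted shape. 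I expect the main obstacle to be the constant bookkeeping in the denominator. At level $h^\vee$ the only regular dominant class is $\rho$ (since $P_+^0=\{0\}$), so the transformed denominator reduces to $C\cdot\sum_{w}(-1)^{l(w)}\Theta^{(h^\vee)}_{w\rho}(h,\tau)$ with $C=\sum_{w\in W}(-1)^{l(w)}e^{-\frac{2\pi i}{h^\vee}\langle w\rho,\rho\rangle};$ matching the stated $S_{\Lambda,\Lambda'}$ requires the Gauss-sum evaluation $C=i^{-|\Delta_+|}|P/h^\vee Q_L|^{1/2},$ which is what cancels the denominator index and produces the factor $i^{|\Delta_+|}.$ Establishing this identity exactly---pinning down both the power of $i$ and the square root, via the affine denominator identity and the ``strange formula'' relating $\langle\rho,\rho\rangle,$ $h^\vee$ and $\dim\g$---is where essentially all of the remaining work lies.
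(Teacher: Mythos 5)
Your proposal is correct and takes essentially the same route as the paper, which gives no proof of its own but quotes this theorem from Kac's book \cite{K}, where the argument is precisely yours: the Weyl--Kac character formula written as a ratio of alternating sums of classical theta functions, Poisson summation on $Q_L$ (with dual lattice $P$) for the theta transformation, cancellation of non-regular classes and regrouping of regular $W$-orbits by dominant representatives $\Lambda'+\rho$, and the Gauss-sum evaluation of the level-$h^\vee$ denominator constant. The identity you defer, $C=i^{-|\Delta_+|}|P/h^\vee Q_L|^{1/2}$, is exactly the crux handled in \cite{K}: by the Weyl denominator identity it reduces to $\prod_{\alpha\in\Delta_+}2\sin\frac{\pi\langle\rho,\alpha\rangle}{h^\vee}=|P/h^\vee Q_L|^{1/2}$, which is true (one can check it directly, e.g.\ for $A_1$, $A_2$, $G_2$), so your plan is sound.
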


Recall Theorem \ref{2.3} and Proposition \ref{p1}.
\begin{lem}\label{same} The $S$-matrices in Theorem \ref{2.3} and Theorem \ref{k} are the same.
\end{lem}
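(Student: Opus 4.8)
The plan is to apply two modular transformation laws to the \emph{same} family of functions and then separate them by linear independence. First I would check that $V = L_{\wg}(k,0)$ is a strong rational vertex operator algebra: Theorem \ref{affinevoa} supplies simplicity, rationality and $C_2$-cofiniteness, the grading has $V_0 = \C\mathbf 1$ so $V$ is of CFT type, and the conformal weights $n_\Lambda = \frac{\langle \Lambda+2\rho,\Lambda\rangle}{2(k+h^\vee)}$ are nonnegative, vanishing precisely for $\Lambda = 0$; thus (V1)--(V3) hold and both Proposition \ref{p1} and Theorem \ref{2.3} apply. Since the irreducible modules are exactly $\{L_{\wg}(k,\Lambda)\mid \Lambda\in P_+^k\}$, the index set $\{0,\dots,p\}$ of Theorem \ref{2.3} is in bijection with $P_+^k$, with $M^0 = L_{\wg}(k,0)$ corresponding to $\Lambda = 0$.

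Next I would identify the two characters. For $v = h\in\h\subset\g = V_1$ the zero mode is $o(h) = h_0 = h(0)$, so with $M^i = L_{\wg}(k,\Lambda)$ one has
$$\chi_i(h,\tau) = \tr_{M^i} e^{2\pi i h(0)} q^{L(0)-c/24} = \chi_{\Lambda}(h,\tau),$$
meaning the function $\chi_i(v,\tau)$ of Proposition \ref{p1} with $v\in\h$ is literally the character $\chi_{\Lambda}(h,\tau)$ of Theorem \ref{k}. The one genuine bookkeeping point is the normalization of the form: the bilinear form $(\,\cdot,\cdot\,)$ fixed by $(u,v) = u_1 v$ on $V_1 = \g$ satisfies $(u,v) = k\langle u,v\rangle$, because $a_1 b = k\langle a,b\rangle\mathbf 1$ for $a,b\in\g$ in $L_{\wg}(k,0)$. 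Hence the Gaussian prefactor $e^{\pi i (v,v)/\tau}$ of Proposition \ref{p1} equals the prefactor $e^{\pi i k\langle h,h\rangle/\tau}$ of Theorem \ref{k}.

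I would then specialize both formulas to $\gamma = \left(\begin{smallmatrix}0&-1\\1&0\end{smallmatrix}\right)$, so $c\tau+d = \tau$ and $(a\tau+b)/(c\tau+d) = -1/\tau$. Writing $\gamma_{\Lambda,\Lambda'}$ for the $(\Lambda,\Lambda')$ entry of the genus-one $S$-matrix $\rho_V\left(\begin{smallmatrix}0&-1\\1&0\end{smallmatrix}\right)$ of Theorem \ref{2.3}, Proposition \ref{p1} reads
$$\chi_{\Lambda}\!\left(\tfrac{h}{\tau},\tfrac{-1}{\tau}\right) = e^{\pi i k\langle h,h\rangle/\tau}\sum_{\Lambda'\in P_+^k} \gamma_{\Lambda,\Lambda'}\,\chi_{\Lambda'}(h,\tau),$$
whereas Theorem \ref{k} gives the identical left-hand side with $\gamma_{\Lambda,\Lambda'}$ replaced by Kac's explicit $S_{\Lambda,\Lambda'}$. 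Cancelling the common prefactor leaves $\sum_{\Lambda'} \gamma_{\Lambda,\Lambda'}\chi_{\Lambda'}(h,\tau) = \sum_{\Lambda'} S_{\Lambda,\Lambda'}\chi_{\Lambda'}(h,\tau)$ as functions on $\h\times\H$, and since the $\chi_{\Lambda'}(h,\tau)$ are linearly independent by Theorem \ref{k}(1), I can equate coefficients to obtain $\gamma_{\Lambda,\Lambda'} = S_{\Lambda,\Lambda'}$ for all $\Lambda,\Lambda'\in P_+^k$, which is the claim.

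There is no deep obstacle here, as the argument is entirely a matter of reconciling conventions; but the step that must be handled with care is the factor-of-$k$ scaling between the vertex operator algebra form $(\,\cdot,\cdot\,)$ and the normalized invariant form $\langle\,\cdot,\cdot\,\rangle$. This is exactly what forces the two quadratic exponentials to agree, and without that matching the prefactors would not cancel and linear independence could not be invoked to identify the matrices.
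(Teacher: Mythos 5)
Your proof is correct and follows essentially the same route as the paper's: verify that $L_{\wg}(k,0)$ satisfies (V1)--(V3), compute the scaling $(a,b)=a_1b=k\langle a,b\rangle$ so the Gaussian prefactors match, apply Proposition \ref{p1} with $v=h\in\h$, and conclude by the linear independence in Theorem \ref{k}(1). Your write-up is somewhat more explicit than the paper's (spelling out the identification $o(h)=h(0)$ and the verification of the hypotheses), but the argument is the same.
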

\begin{proof}
Let $V=L_{\wg}(k,0).$ Then $V$ satisfies the assumptions in Theorem \ref{2.3}. Recall that we have defined a bilinear form on $V_1$ such that $(u,v)=u_1v.$ Let $u=a(-1)\1$ and $v=b(-1)\1$
for $a,b\in\g.$ Then $(u,v)=a(1)b(-1)\1=k\<a,b\>.$ Identify Lie algebra $V_1$ with $\g$ we see that $(a,b)=k\<a,b\>.$
We denote the $S$-matrix in Theorem \ref{2.3} by $(S_{L_{\wg}(k,\Lambda), L_{\wg}(k,\Lambda')}).$ Then from Proposition \ref{p1} we  know
$$\chi_{\Lambda}(\frac{h}{\tau},\frac{-1}{\tau})=e^{\pi ik\<h,h\>/\tau} \sum_{\Lambda'\in P_+^k}S_{L_{\wg}(k,\Lambda), L_{\wg}(k,\Lambda')}\chi_{\Lambda'}(h,\tau).$$
The result now is an immediate consequence of Theorem \ref{k} (1).
\end{proof}

\section{Parafermion vertex operator algebras}
\setcounter{equation}{0}

In this section we recall the definition of a parafermion  vertex operator algebra $K(\g,k)$ associated to any finite dimensional simple Lie algebra $\g$ and a positive integer $k.$ We also  discuss some known results on $K(\g,k)$ from \cite{DR}.

Let $\lambda_i\in P$ such that $\lambda_i=\frac{\<\theta,\theta\>}{\<\alpha_i,\alpha_i\>}\Lambda_i$ for $i=1,...,l.$ Then
$\<\alpha_i,\lambda_j\>=\delta_{i,j}$ for all $i,j$ and $Q^{\circ}=\bigoplus_{i=1}^l\Z\lambda_i.$ The following result is immediate
from the relation between $\Lambda_i$ and $\lambda_i.$
\begin{lem}\label{qdual} $P/Q^{\circ}$ is a group of order
$$|P/Q^{\circ}|=\left\{\begin{array} {ll}
1 &  A_l,D_l, E_6,E_7,E_8\\
2 & B_l\\
2^{l-1} & C_l\\
2^2 &  F_4\\
3 & G_2
\end{array}\right.$$
\end{lem}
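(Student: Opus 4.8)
The plan is to compute $P/Q^{\circ}$ directly from the explicit bases already recorded. By the definition of $\lambda_i$ we have $\lambda_i=\frac{\langle\theta,\theta\rangle}{\langle\alpha_i,\alpha_i\rangle}\Lambda_i=\frac{2}{\langle\alpha_i,\alpha_i\rangle}\Lambda_i$, using that $\theta$ is a long root so $\langle\theta,\theta\rangle=2$. Hence each $\lambda_i$ is a positive integer multiple $c_i\Lambda_i$ of the corresponding fundamental weight, where $c_i=2/\langle\alpha_i,\alpha_i\rangle$. A long simple root has $\langle\alpha_i,\alpha_i\rangle=2$, so $c_i=1$; a short simple root has $\langle\alpha_i,\alpha_i\rangle=2/r$, where $r$ is the ratio of the squared length of a long root to that of a short root, so $c_i=r$. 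In particular $Q^{\circ}\subseteq P$.

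First I would observe that, since $\{\Lambda_i\}$ is a $\Z$-basis of $P$ and $\lambda_i=c_i\Lambda_i$, the quotient splits as a direct sum over the simple roots,
$$P/Q^{\circ}\cong\bigoplus_{i=1}^l\Z\Lambda_i/\Z c_i\Lambda_i\cong\bigoplus_{i=1}^l\Z/c_i\Z,$$
so that $|P/Q^{\circ}|=\prod_{i=1}^l c_i=r^{s}$, where $s$ is the number of short simple roots (the long simple roots contribute trivial factors $c_i=1$). This reduces the lemma to reading off, for each Dynkin type, the length ratio $r$ and the number $s$ of short simple roots.

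It then remains to carry out this bookkeeping, which is the only real work. For the simply-laced types $A_l,D_l,E_6,E_7,E_8$ every root is long, so $s=0$ and $P/Q^{\circ}$ is trivial. For $B_l$ there is a single short simple root and $r=2$, giving order $2$; for $C_l$ there are $l-1$ short simple roots and $r=2$, giving $2^{l-1}$; for $F_4$ there are two short simple roots and $r=2$, giving $2^2$; and for $G_2$ there is one short simple root with $r=3$, giving $3$. I expect the main obstacle to be one of care rather than depth: one must correctly recall, for each non-simply-laced type, both the short-to-long length ratio and exactly how many simple roots are short, and check that the normalization $\langle\theta,\theta\rangle=2$ indeed makes every $c_i$ the claimed integer.
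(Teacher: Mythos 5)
Your proposal is correct and is essentially the paper's own argument spelled out: the paper declares the lemma ``immediate from the relation between $\Lambda_i$ and $\lambda_i$,'' and that relation is precisely your observation that $\lambda_i=c_i\Lambda_i$ with $c_i=2/\langle\alpha_i,\alpha_i\rangle$, so $P/Q^{\circ}\cong\bigoplus_i\Z/c_i\Z$ and the order is $r^{s}$ with $s$ the number of short simple roots. Your case-by-case count of short simple roots and length ratios for each type is accurate, so nothing is missing.
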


\begin{lem}\label{QLP} For any simple Lie algebra $\g$ and any positive integer  $k,$ $\frac{1}{k}P/Q^{\circ}$ and
 $Q/kQ_L$ are dual groups, so  that for any  $\beta \in Q,$  $g_{\beta }(\alpha)=e^{2\pi i \<\beta,\alpha\>}$  defines
an irreducible character for  $\frac{1}{k}P/Q^{\circ}$ In particular,  $\frac{1}{k}P/Q^{\circ}$ and  $Q/kQ_L$  are isomorphic groups.
\end{lem}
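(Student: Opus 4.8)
The plan is to exhibit an explicit perfect pairing between the two finite abelian groups and to read off all three assertions (duality, the character property, and the isomorphism) from its non-degeneracy. Concretely, I would work with
$$\frac{1}{k}P/Q^{\circ}\times Q/kQ_L\to \C^{*},\qquad (\alpha+Q^{\circ},\,\beta+kQ_L)\mapsto e^{2\pi i\<\beta,\alpha\>},$$
and verify that $g_{\beta}(\alpha)=e^{2\pi i\<\beta,\alpha\>}$ descends to a genuine pairing of cosets. Before doing so, note that both quotients are finite: since $\frac{\<\theta,\theta\>}{\<\alpha_i,\alpha_i\>}\in\Z$ one has $Q^{\circ}=\bigoplus_i\Z\lambda_i\subseteq\bigoplus_i\Z\Lambda_i=P\subseteq\frac{1}{k}P$, and $kQ_L\subseteq Q_L\subseteq Q$, so in each case one is quotienting a full-rank lattice by a finite-index sublattice.

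Well-definedness requires two integrality checks. First, changing $\alpha$ by an element of $Q^{\circ}$ leaves $g_{\beta}(\alpha)$ unchanged, because the relation $\<\alpha_i,\lambda_j\>=\delta_{i,j}$ makes $Q^{\circ}$ exactly the dual lattice of $Q$, so $\<\beta,Q^{\circ}\>\subseteq\Z$ for $\beta\in Q$. Second, the assignment $\beta\mapsto g_{\beta}$ kills $kQ_L$: writing $\beta=k\gamma$ with $\gamma\in Q_L$ and $\alpha=\frac{1}{k}\mu$ with $\mu\in P$ gives $\<\beta,\alpha\>=\<\gamma,\mu\>\in\Z$, since $P$ is the dual lattice of $Q_L$ (recalled above from \cite{ADJR}). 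Hence for each $\beta\in Q$ the map $g_{\beta}$ is a well-defined homomorphism $\frac{1}{k}P/Q^{\circ}\to\C^{*}$, i.e. an irreducible character, which is the assertion about $g_\beta$.

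Next I would establish non-degeneracy in both slots. On the $Q$-side, suppose $g_{\beta}$ is the trivial character; then $\<\beta,\tfrac{1}{k}\mu\>\in\Z$ for all $\mu\in P$, i.e. $\<\tfrac{1}{k}\beta,\mu\>\in\Z$ for all $\mu\in P$, so $\tfrac{1}{k}\beta\in P^{\circ}=Q_L$ (biduality applied to $P=Q_L^{\circ}$), whence $\beta\in kQ_L$. On the other side, if $\alpha\in\frac{1}{k}P$ satisfies $\<\beta,\alpha\>\in\Z$ for every $\beta\in Q$, then $\alpha\in Q^{\circ}$ directly from the definition of $Q^{\circ}$ as the dual of $Q$. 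Thus the two induced homomorphisms $Q/kQ_L\to\widehat{\frac{1}{k}P/Q^{\circ}}$ and $\frac{1}{k}P/Q^{\circ}\to\widehat{Q/kQ_L}$ are both injective.

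Finally, since $|\widehat{A}|=|A|$ for a finite abelian group $A$, injectivity in both directions forces $|\frac{1}{k}P/Q^{\circ}|=|Q/kQ_L|$ and promotes both maps to isomorphisms; this is precisely the statement that the two groups are dual under the above pairing, and combined with $\widehat{Q/kQ_L}\cong Q/kQ_L$ it yields the claimed (non-canonical) isomorphism $\frac{1}{k}P/Q^{\circ}\cong Q/kQ_L$. The only genuinely non-formal inputs are that $Q^{\circ}$ is the dual lattice of $Q$, that $P$ is the dual lattice of $Q_L$, and the biduality $Q_L=P^{\circ}$; of these the biduality is the step to watch, since it relies on $Q_L$ being a full-rank lattice (the long roots span $\h^{*}$), without which $P^{\circ}$ need not coincide with $Q_L$. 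Everything else is the standard \emph{injective on both sides $\Rightarrow$ perfect pairing} argument for finite abelian groups.
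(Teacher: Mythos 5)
Your proof is correct, and it takes a genuinely different route from the paper's at the decisive step. The paper only proves one half of the non-degeneracy: it observes that $g_{\beta}$ is a well-defined character (from $Q_L^{\circ}=P$) and that $g_{\beta_1}=g_{\beta_2}$ if and only if $\beta_1-\beta_2\in kQ_L$, so that $Q/kQ_L$ embeds into the character group of $\frac{1}{k}P/Q^{\circ}$; it then finishes by a counting argument, reducing to $|P/Q^{\circ}|=|Q/Q_L|$ (both indices scale by $k^l$) and verifying this equality case by case against Lemma 5.1 — trivially for types $A$, $D$, $E$, and by explicit root-system computations of $|Q/Q_L|$ with coset representatives for $B_l$, $C_l$, $F_4$ and $G_2$. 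You instead prove injectivity in \emph{both} slots of the pairing, using only the lattice-duality facts that $Q^{\circ}$ is the dual lattice of $Q$, that $P$ is the dual lattice of $Q_L$, and the biduality $P^{\circ}=Q_L$; the equality of orders then falls out formally from $|A|=|\widehat{A}|$ for finite abelian groups, with no case analysis and no appeal to Lemma 5.1 at all. What your argument buys is uniformity across all Lie types and a cleaner logical structure (it also makes explicit the well-definedness checks the paper compresses into ``Clearly''); what the paper's computation buys is concrete information — the explicit values of $|Q/Q_L|$ and coset representatives for each type — which your formal argument does not produce. The one non-formal input you rely on, $P^{\circ}=(Q_L^{\circ})^{\circ}=Q_L$, does require $Q_L$ to be of full rank (the long roots span $\h^{*}$, e.g.\ because the Weyl group acts irreducibly on $\h^{*}$ and the Weyl orbit of $\theta$ consists of long roots); you correctly flag this as the step to watch, and it holds for every simple $\g$.
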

\begin{proof} Clearly, $g_{\beta}$ defines a irreducible character of $\frac{1}{k}P/Q^{\circ}$ as $Q_L^{\circ}=P.$  Also, $g_{\beta_1}=g_{\beta_2}$ for
if and only if $\beta_1-\beta_2\in kQ_L.$  So   $Q/kQ_L$ is a subgroup of the dual group of $\frac{1}{k}P/Q^{\circ}.$  To finish the proof,
it is enough to show that $|P/Q^{\circ}|=|Q/Q_L|.$ This is obvious if $\g$ is a Lie algebra of type $A, D,E.$
If  $\g$ is a Lie algebra of  other type, we verify the result  case by case using the root systems given in \cite{H}.
 We have already known $|P/Q^{\circ}|$ from Lemma \ref{qdual}. So we only need to compute $|Q/Q_L|.$

 (1)  Type $B_l.$ Let $\E=\R^l$ with the standard orthonormal basis $\{\epsilon_1,...,\epsilon_l\}.$ Then
 $$\Delta=\{\pm\epsilon_i, \pm(\epsilon_i\pm\epsilon_j)|i\ne j\}.$$
 Then $Q=\sum_i^l\Z\epsilon_i$ and $Q_L=\sum_{i\ne j}(\Z(\epsilon_i+\epsilon_j)+\Z(\epsilon_i-\epsilon_j)).$
It is evident that $2Q\subset Q_L$ and  $|Q/Q_L|=2$ with coset representatives $0$ and $\epsilon_1.$

(2) Type $C_l.$ In this case, $$\Delta=\{\pm\sqrt{2}\epsilon_i, \pm\frac{1}{\sqrt{2}}(\epsilon_i\pm\epsilon_j)|i\ne j\}.$$
Then $Q=\frac{1}{\sqrt{2}}\sum_{i\ne j}(\Z(\epsilon_i+\epsilon_j)+\Z(\epsilon_i-\epsilon_j))$ and  $Q_L=\sqrt{2}\sum_{i=1}^l\Z\epsilon_i.$ Thus $|Q/Q_L|=2^{l-1}$ with coset representatives
$a_1\alpha_1+a_2\alpha_2+\cdots +a_{l-1}\alpha_{l-1}$ for $a_i=0,1$ and $\alpha_i=\frac{1}{\sqrt{2}}(\epsilon_i-\epsilon_{i+1}).$

 (3) Type $F_4.$ Let $\E=\R^4.$ Then
 $$\Delta=\{\pm\epsilon_i, \pm(\epsilon_i\pm\epsilon_j), \pm\frac{1}{2}(\epsilon_1\pm \epsilon_2\pm \epsilon_3\pm \epsilon_4)|i\ne j\}.$$
Then $|Q/Q_L|=2^2$ with coset representatives $a\epsilon_3+b\frac{1}{2}(\epsilon_1- \epsilon_2- \epsilon_3-\epsilon_4)$ for   $a,b=0,1.$

 (4) Type $G_2.$ Let $\E$ be the subspace of $\R^3$ orthogonal to $\epsilon_1+\epsilon_2+\epsilon_3.$
 Then
 $$\Delta=\pm\frac{1}{\sqrt{3}}\{\epsilon_i-\epsilon_j, 2\epsilon_1-\epsilon_2-\epsilon_3, 2\epsilon_2-\epsilon_1-\epsilon_3,
  2\epsilon_3-\epsilon_1-\epsilon_2|i\ne j\}.$$
Then $|Q/Q_L|=3$ with coset representatives $a\frac{1}{\sqrt{3}}( \epsilon_1-\epsilon_2)$ for  $a=0,1,2.$ The proof is complete.
\end{proof}

Let $M_{\widehat{\h}}(k)$ be the vertex operator subalgebra of $L_{\widehat{\g}}(k,0)$
generated by $h(-1)\1$ for $h\in \mathfrak h.$
For $\lambda\in
{\mathfrak h}^*,$ denote by  $M_{\widehat{\h}}(k,\lambda)$ the irreducible
highest weight module for $\wh$ with a highest weight vector
$e^\lambda$ such that $h(0)e^\lambda = \lambda(h) e^\lambda$ for
$h\in \mathfrak h.$ The parafermion vertex operator algebra $K(\g,k)$ is the commutant \cite{FZ} of
$M_{\widehat{\h}}(k)$ in $L_{\wg}(k,0).$ We have the following decomposition
\begin{equation}\label{e5.1}
L_{\wg}(k,\Lambda)=\bigoplus_{\lambda\in Q+\Lambda}M_{\widehat{\h}}(k,\lambda)\otimes M^{\Lambda,\lambda}=\bigoplus_{\alpha\in Q}M_{\widehat{\h}}(k,\Lambda+\alpha)\otimes M^{\Lambda,\Lambda+\alpha}
\end{equation}
as $M_{\widehat{\h}}(k)\otimes K(\g,k)$-module. Moreover, $M^{0,0}=K(\g,k)$ and $M^{\Lambda,\lambda}$ is an
irreducible $K(\g,k)$-module \cite{DR}.  Recall equation (\ref{e5.1}). It is easy to see that
\begin{equation}\label{e5.2}
L_{\wg}(k,\Lambda)(\lambda)=M_{\widehat{\h}}(k,\lambda)\otimes M^{\Lambda,\lambda}.
\end{equation}

It is proved in \cite{DW3} that the lattice vertex operator algebra $V_{\sqrt{k}Q_L}$ is a vertex operator subalgebra of $L_{\widehat{\g}}(k,0)$ and the parafermion vertex operator algebra $K(\g,k)$ is also a commutant
of $V_{\sqrt{k}Q_L}$ in $L_{\widehat{\g}}(k,0).$ This gives us another decomposition
\begin{equation}\label{e5.3}
L_{\widehat{\g}}(k,\Lambda)=\bigoplus_{i\in Q/kQ_L}V_{\sqrt{k}Q_L+\frac{1}{\sqrt{k}}(\Lambda+\beta_i)}\otimes M^{\Lambda,\Lambda+\beta_i}
\end{equation}
as modules for $V_{\sqrt{k}Q_L}\otimes K(\g,k),$ where $M^{\Lambda,\lambda}$ is as before,
$Q=\cup_{i\in Q/kQ_L} (kQ_L+\beta_i),$ and $\beta_i\in Q$ is a  representative of $i.$ Moreover, for any $i\in Q/kQ_L,$ we have
$$V_{\sqrt{k}Q_L+\frac{1}{\sqrt{k}}(\Lambda+\beta_i)}\otimes M^{\Lambda,\Lambda+\beta_i}=\bigoplus_{\alpha\in kQ_L}M_{\widehat{\h}}(k,\Lambda+\alpha+\beta_i)\otimes M^{\Lambda,\Lambda+\alpha+\beta_i}$$
where we have used the isomorphism between $K(\g,k)$-modules $M^{\Lambda,\Lambda+\alpha+\beta_i}$ and
 $M^{\Lambda,\Lambda+\beta_i}$ for any $\alpha\in kQ_L$ \cite{DR}. So for any $h\in \h,$ $h(0)\in \wg$ acts  as $\sqrt{k}h(0)$ or $\<h,k\alpha+\Lambda+\beta_i\>$  on $e^{\sqrt{k}\alpha+\frac{1}{\sqrt{k}}(\Lambda+\beta_i)}.$

Let $\theta=\sum_{i=1}^la_i\alpha_i.$  According to \cite{L1},\cite{L2}, if $a_i=1$ then $L_{\wg}(k,k\Lambda_i)$ is a simple current and
$L_{\wg}(k,k\Lambda_i)\boxtimes L_{\wg}(k,\Lambda)=L_{\wg}(k,\Lambda^{(i)})$ for any $\Lambda\in P_+^k$
where $\Lambda^{(i)}\in P_+^k$ is uniquely determined by $\Lambda$ and $i.$
Then  $L_{\wg}(k,\Lambda)$ and $L_{\wg}(k,\Lambda^{(i)})$ are isomorphic $K(\g,k)$-modules \cite{DR}.

Here are some  main results on $K(\g,k)$ from \cite{DW2}, \cite{ALY1}, \cite{ALY2},  \cite{DR}, \cite{ADJR}.
\begin{thm}\label{DR} Let $\g$ be a simple Lie algebra and $k$ a positive integer.

(1) The $K(\g,k)$ is a rational, simple  and $C_2$-cofinite vertex operator algebra of CFT type.

(2) For any $\Lambda\in P_+^k,$ $\lambda\in \Lambda+Q$
and $\alpha\in Q_L,$ $M^{\Lambda,\lambda}=M^{\Lambda,\lambda+k\alpha}.$

(3) For each $i\in I,$ $\Lambda\in P_+^k$ there exists a unique  $\Lambda^{(i)}\in P_+^k$  such that
for any $\lambda\in \Lambda+Q,$ $M^{\Lambda,\lambda}=M^{\Lambda^{(i)},\lambda+k\Lambda_i}.$

(4) Any irreducible $K(\g,k)$-module is isomorphic to $M^{\Lambda,\lambda}$ for some $\Lambda\in P_+^k$
and $\lambda\in \Lambda+Q.$

(5) The $K(\g,k)$ has exactly $\frac{|P_+^k||Q/kQ_L|}{|P/Q|}$ inequivalent irreducible modules.
\end{thm}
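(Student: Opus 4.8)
The plan is to take parts (1)--(4) as established in \cite{DW2}, \cite{ALY1}, \cite{DR} and \cite{ADJR}---namely rationality and $C_2$-cofiniteness together with the fact that the complete list of irreducible $K(\g,k)$-modules is $\{M^{\Lambda,\lambda}\}$---and to deduce the enumeration in (5) from (2), (3), (4) together with Lemmas \ref{qdual} and \ref{QLP}. First I would fix the parametrization: by (4) every irreducible module is some $M^{\Lambda,\lambda}$ with $\Lambda\in P_+^k$ and $\lambda\in\Lambda+Q$, and by (2) its isomorphism class depends on $\lambda$ only through the image $\bar\lambda\in(\Lambda+Q)/kQ_L$. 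Since each coset $(\Lambda+Q)/kQ_L$ is a translate of $Q/kQ_L$ and hence has cardinality $|Q/kQ_L|$, the set $\mathcal P$ of pairs $(\Lambda,\bar\lambda)$ has $|P_+^k|\cdot|Q/kQ_L|$ elements, an initial overcount of the irreducibles.

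Next I would interpret the identification (3) as a group action on $\mathcal P$. The simple currents $L_{\wg}(k,k\Lambda_i)$ with $a_i=1$, together with the vacuum $L_{\wg}(k,0)$, form a finite abelian group $\mathcal J$ under fusion $\boxtimes$, and $\mathcal J\cong P/Q$, of order $|P/Q|$. By (3) the element of $\mathcal J$ indexed by such an $i$ sends $(\Lambda,\bar\lambda)$ to $(\Lambda^{(i)},\overline{\lambda+k\Lambda_i})$; this is a well-defined action whose orbits are precisely the isomorphism classes detected by (2)--(3).

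The crux is that this action is free. Suppose the element indexed by $i$ fixes $(\Lambda,\bar\lambda)$. Fixing the first coordinate gives $\Lambda^{(i)}=\Lambda$, so that $\lambda+k\Lambda_i$ again lies in $\Lambda+Q$ and fixing the second coordinate becomes the condition $k\Lambda_i\in kQ_L$, i.e. $\Lambda_i\in Q_L\subseteq Q$. But then $\Lambda_i$ is trivial in $P/Q$, so the chosen element of $\mathcal J$ is the identity. Hence $\mathcal J$ acts freely and the number of orbits is $|P_+^k|\,|Q/kQ_L|/|P/Q|$, which is exactly the stated value; here $|P/Q|$ and the factors of $|Q/kQ_L|$ are read off from Lemmas \ref{qdual} and \ref{QLP}.

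I expect the real obstacle to lie not in this orbit count but in the opposite direction: showing that pairs in distinct $\mathcal J$-orbits yield genuinely non-isomorphic $K(\g,k)$-modules, equivalently that (2) and (3) already generate all isomorphisms among the $M^{\Lambda,\lambda}$. This is where I would invoke the quantum-dimension analysis of \cite{ADJR}: one checks that $\qdim M^{\Lambda,\lambda}$ is constant on $\mathcal J$-orbits and then matches $\sum(\qdim M^{\Lambda,\lambda})^2$ over orbit representatives against the global dimension of $K(\g,k)$ produced by the coset decompositions (\ref{e5.1}) and (\ref{e5.3}). Ruling out accidental coincidences outside the simple-current orbits is the delicate step, and it is precisely the point at which I would rely most on the classification already cited.
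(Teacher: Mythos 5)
The paper offers no proof of Theorem \ref{DR} at all: it is stated as a compilation of results quoted from \cite{DW2}, \cite{ALY1}, \cite{DR} and \cite{ADJR}, so there is no internal argument to compare against, and your treatment of (1)--(4) by citation matches the paper exactly. Your added derivation of (5) is a correct reconstruction of how the count arises in the cited works: the $|P_+^k|\,|Q/kQ_L|$ pairs $(\Lambda,\bar\lambda)$, the action of the simple-current group $\mathcal J\cong P/Q$, and the orbit count give precisely $\frac{|P_+^k|\,|Q/kQ_L|}{|P/Q|}$, and your freeness argument is sound, since $\Lambda_i\in Q_L\subseteq Q$ would make $\Lambda_i$ trivial in $P/Q$, contradicting that the cominimal weights represent the nontrivial classes. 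Two points you assert rather than check deserve a word. First, that $(\Lambda,\bar\lambda)\mapsto(\Lambda^{(i)},\overline{\lambda+k\Lambda_i})$ really is a group action requires $\Lambda_i+\Lambda_j-\Lambda_m\in Q_L$ (not merely $\in Q$) whenever $\Lambda_i+\Lambda_j\equiv\Lambda_m\ (\mathrm{mod}\ Q)$; this is vacuous in the simply-laced cases (where $Q=Q_L$) and in types $F_4$, $G_2$ (where $P=Q$), while for $B_l$ and $C_l$ it reduces to $2\Lambda_i\in Q_L$ for the unique cominimal $i$, which holds but needs the explicit root-system check in the spirit of Lemma \ref{QLP}. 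Second, as you yourself note, the statement that (2) and (3) exhaust all coincidences among the $M^{\Lambda,\lambda}$ is the genuinely hard input; it is exactly what \cite{DR} and \cite{ADJR} establish, so deferring to them is legitimate, but it means your argument unpacks the enumeration already contained in the cited classification rather than proving (5) independently of it --- which is also all the paper itself does.
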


\section{Trace functions for parafermion vertex operator algebras}
\setcounter{equation}{0}

In this section we determine the $S$-matrix for parafermion vertex operator algebra $K(\g,k)$ associated to any finite dimensional simple Lie algebra $\g$ and positive
integer $k$ from \cite{DR}.

From decomposition (\ref{e5.1}) we have
$$\chi_{ L_{\wg}(k,\Lambda)}(\tau)=\sum_{\lambda\in Q+\Lambda}\chi_{M_{\widehat{\h}}(k,\lambda)}(\tau)\chi_{M^{\Lambda,\lambda}}(\tau)=\sum_{\lambda\in Q+\Lambda}\frac{q^{\<\lambda,\lambda\>/2k}}{\eta(\tau)^l}\chi_{M^{\Lambda,\lambda}}(\tau)$$
where $\eta(\tau)=q^{1/24}\prod_{n\geq 1}(1-q^n).$ The
$$c^{\Lambda}_{\lambda}(\tau)=\eta(\tau)^{-l}\chi_{M^{\Lambda,\lambda}}(\tau)$$
is called the string function and $\chi_{M^{\Lambda,\lambda}}(\tau)$ is called the branching function denoted by $b_{\lambda}^{\Lambda}(\tau)$ in \cite{K}.  The modular transformation
formulas of branching functions
were also given in \cite{K}:
\begin{thm}\label{t4.2} Let $\Lambda\in P_+^k$ and $i\in Q/kQ_L.$  Then

(1)  $\chi_{M^{\Lambda,\Lambda+\beta_i}}(\tau+1)=e^{2\pi i(\frac{\<\Lambda+2\rho, \Lambda\>}{2(k+h^{\vee})}-\frac{\<\Lambda+\beta_i,\Lambda+\beta_i\>}{2k}-\frac{k\dim\g}{24(k+h^{\vee})}+\frac{l}{24})}\chi_{M^{\Lambda,\Lambda+\beta_i}}(\tau),$

(2) $\chi_{M^{\Lambda,\Lambda+\beta_i}}(\frac{-1}{\tau})=\sum_{\Lambda'\in P_+^k,j\in Q/kQ_L}S_{(\Lambda,\Lambda+\beta_i),(\Lambda',\Lambda'+\beta_i)}^P\chi_{M^{\Lambda',\Lambda'+\beta_j}}(\tau),$
where
$$S_{(\Lambda,\Lambda+\beta_i),(\Lambda',\Lambda'+\beta_i)}^P=|P/kQ_L|^{-1/2}S_{\Lambda,\Lambda'}e^{2\pi i\frac{\<\Lambda+\beta_i,\Lambda'+\beta_j\>}{k}}$$
and $P$ stands for Parafermoin.
\end{thm}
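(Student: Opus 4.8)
The plan is to derive both transformation formulas for the branching functions $\chi_{M^{\Lambda,\Lambda+\beta_i}}(\tau)$ by exploiting the two product decompositions of the affine character that are made available by (\ref{e5.1}) and (\ref{e5.3}). The guiding principle is that the parafermion branching function is the quotient of a known affine/lattice character by a known Heisenberg character, and modular transformations of the latter two are available from Theorem~\ref{k} (via Lemma~\ref{same}) and from the standard lattice theta-function transformation. Throughout I would use that $L_{\wg}(k,\Lambda)(\lambda)=M_{\widehat{\h}}(k,\lambda)\otimes M^{\Lambda,\lambda}$ from (\ref{e5.2}), so that the Heisenberg factor contributes precisely $q^{\<\lambda,\lambda\>/2k}/\eta(\tau)^l$ and separates cleanly from the parafermion factor.

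For part (1), the $T$-matrix, I would simply extract the $L(0)$-eigenvalue on the lowest-weight vector of $M^{\Lambda,\Lambda+\beta_i}$. The conformal weight of $L_{\wg}(k,\Lambda)$ is $n_\Lambda=\frac{\<\Lambda+2\rho,\Lambda\>}{2(k+h^\vee)}$ by Theorem~\ref{affinevoa}, and its central charge is $\frac{k\dim\g}{k+h^\vee}$; the Heisenberg piece $M_{\widehat{\h}}(k,\Lambda+\beta_i)$ carries weight $\frac{\<\Lambda+\beta_i,\Lambda+\beta_i\>}{2k}$, central charge $l$, and $\eta$-denominator. Subtracting the Heisenberg contribution from the affine one and reading off the resulting $e^{2\pi i(\mathrm{wt}-c/24)}$ factor gives exactly the stated exponent. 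This is routine bookkeeping and I do not expect any obstacle here.

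For part (2), the $S$-matrix, the strategy is to take the $S$-transformation of the full affine character, transport it through the Heisenberg/lattice factor, and isolate the parafermion part. Concretely I would write, for each fixed $\Lambda+\beta_i$-sector, the affine character as a sum over $\alpha\in kQ_L$ of Heisenberg theta functions times $\chi_{M^{\Lambda,\Lambda+\beta_i}}$, using that $M^{\Lambda,\Lambda+\alpha+\beta_i}\cong M^{\Lambda,\Lambda+\beta_i}$ for $\alpha\in kQ_L$ by Theorem~\ref{DR}(2). Applying Lemma~\ref{same} to the affine $S$-matrix $S_{\Lambda,\Lambda'}$ and the Jacobi/Poisson-summation transformation of the rescaled lattice theta functions attached to $\sqrt{k}Q_L+\frac{1}{\sqrt k}(\Lambda+\beta_i)$, I would match coefficients of the independent Heisenberg characters on both sides. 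The factor $|P/kQ_L|^{-1/2}$ together with the phase $e^{2\pi i\<\Lambda+\beta_i,\Lambda'+\beta_j\>/k}$ should emerge from the Gauss sum over the finite group $Q/kQ_L$ (dual to $\frac1k P/Q^\circ$ by Lemma~\ref{QLP}) that arises when the lattice coset label $i$ is transformed into the dual label $j$.

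The main obstacle, as the authors themselves signal (``the most effort is on the $S$-matrix''), is the careful handling of this finite-group Fourier/Gauss-sum step: one must verify that the $e^{\pi i k\<h,h\>/\tau}$ automorphy factor from Theorem~\ref{k} cancels exactly against the corresponding factor in the lattice theta transformation, so that the surviving kernel is a genuine (modular-weight-zero in the $\1$ case) matrix, and one must pin down normalizations so that the order $|Q/kQ_L|=|P/Q^\circ|$ of the finite group enters as $|P/kQ_L|^{-1/2}$ rather than some other power. The linear independence statement Theorem~\ref{k}(1), used as in Lemma~\ref{same}, is what ultimately licenses comparing coefficients and identifying $S^P$ uniquely; I would lean on it at the final matching stage.
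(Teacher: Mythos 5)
Your proposal is correct and follows essentially the same route as the paper: the paper actually states Theorem \ref{t4.2} as a recalled result from \cite{K}, but its own proof of the generalization (Theorem \ref{mthm}, of which this statement is the case $w=\1$) is precisely your argument — the decomposition (\ref{e5.3}), the affine $S$-matrix identified via Lemma \ref{same}, the lattice theta transformation with the $e^{\pi i k\<h,h\>/\tau}$ automorphy factors cancelling, and coefficient matching yielding $Z(-1/\tau)=S_AZ(\tau)\overline{S_L}$ with $\overline{S_L}$ supplying the factor $|P/kQ_L|^{-1/2}e^{2\pi i\<\Lambda+\beta_i,\Lambda'+\beta_j\>/k}$. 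One small correction: the linear independence needed at the final matching stage is that of the lattice theta functions $\{\theta_{L+\lambda}(h,\tau)\}$ on $\h\times\H$ (a classical fact the paper invokes directly), not Theorem \ref{k}(1), which concerns the affine characters and enters only through Lemma \ref{same}.
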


The main result in this paper is that the transformation formula for the branching functions remains valid for the trace functions $Z_{M^{\Lambda,\lambda}}(w,\tau)$ with $w\in K(\g,k):$
\begin{thm} \label{mthm}Let $\Lambda\in P_+^k$ and $i\in Q/kQ_L.$ Then for any $w\in K(\g,k)$

(1)  $Z_{M^{\Lambda,\Lambda+\beta_i}}(w,\tau+1)=e^{2\pi i(\frac{\<\Lambda+2\rho, \Lambda\>}{2(k+h^{\vee})}-\frac{\<\Lambda+\beta_i,\Lambda+\beta_i\>}{2k}-\frac{k\dim\g}{24(k+h^{\vee})}+\frac{l}{24})}Z_{M^{\Lambda,\Lambda+\beta_i}}(w,\tau),$

(2) $Z_{M^{\Lambda,\Lambda+\beta_i}}(w,\frac{-1}{\tau})=\tau^{\wt[w]}\sum_{\Lambda'\in P_+^k,j\in Q/kQ_L}S_{(\Lambda,\Lambda+\beta_i),(\Lambda',\Lambda'+\beta_i)}^PZ_{M^{\Lambda',\Lambda'+\beta_j}}(w,\tau).$
\end{thm}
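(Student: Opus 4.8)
The plan is to deduce both parts from the corresponding statements for the affine vertex operator algebra $L_{\wg}(k,0)$, using the decomposition (\ref{e5.3}) to pass from the affine algebra to $K(\g,k)$. The essential starting point is that for $V=L_{\wg}(k,0)$ the vectors admitted by Theorem \ref{gm}, namely those with $h_nw=0$ for all $h\in\h$ and $n\geq 0$, are exactly the elements of $K(\g,k)$, so Theorem \ref{gm} applies to our $w$. Fixing $\Lambda\in P_+^k$, $w\in K(\g,k)$ and $v\in\h$, I would first record the factorization of the affine generalized theta function $\chi_\Lambda(w,v,\tau)=\tr_{L_{\wg}(k,\Lambda)}o(w)e^{2\pi i o(v)}q^{L(0)-c/24}$. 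Since in (\ref{e5.3}) the operator $o(w)$ acts as $1\otimes o(w)$ on the parafermion factor, $e^{2\pi i o(v)}=e^{2\pi i h(0)}$ acts on the lattice factor, and $L(0)-c/24$ splits as $(L^{\mathrm{lat}}(0)-l/24)+(L^{\mathrm{par}}(0)-c_{\mathrm{par}}/24)$, the trace factors as
\begin{equation}\label{eq:fact}
\chi_\Lambda(w,v,\tau)=\sum_{i\in Q/kQ_L}\Theta_{\Lambda+\beta_i}(v,\tau)\,Z_{M^{\Lambda,\Lambda+\beta_i}}(w,\tau),
\end{equation}
where $\Theta_\mu(v,\tau)=\eta(\tau)^{-l}\sum_{\nu\in\mu+kQ_L}e^{2\pi i\<v,\nu\>}q^{\<\nu,\nu\>/2k}$ is the level $k$ theta function attached to $\sqrt{k}Q_L$. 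Because each $Z$ converges (Theorem \ref{2.3} applied to the strong rational algebra $K(\g,k)$ of Theorem \ref{DR}) and each $\Theta$ converges, the finite sum shows $\chi_\Lambda(w,v,\tau)$ converges, so the hypothesis of Theorem \ref{gm} is met. I would also verify here that $\wt[w]$ computed in $L_{\wg}(k,0)$ equals $\wt[w]$ computed in $K(\g,k)$: this follows because $L^{\mathrm{Heis}}(j)w=0$ for all $j\geq 0$ (the commutant condition), whence the square-bracket gradings of the two conformal structures agree on $w$; this is what lets the weight factor produced on the affine side become the weight factor of the parafermion side.

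Part (1) is then immediate from \eqref{eq:fact}. Replacing $\tau$ by $\tau+1$ multiplies $q^{L^{\mathrm{par}}(0)-c_{\mathrm{par}}/24}$ on $M^{\Lambda,\Lambda+\beta_i}$ by $e^{2\pi i(h_{\Lambda,\beta_i}-c_{\mathrm{par}}/24)}$, where $h_{\Lambda,\beta_i}=\frac{\<\Lambda+2\rho,\Lambda\>}{2(k+h^{\vee})}-\frac{\<\Lambda+\beta_i,\Lambda+\beta_i\>}{2k}$ is the conformal weight of $M^{\Lambda,\Lambda+\beta_i}$ and $c_{\mathrm{par}}/24=\frac{k\dim\g}{24(k+h^{\vee})}-\frac{l}{24}$. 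Since $o(w)$ preserves the $L^{\mathrm{par}}(0)$-grading, which lies in $h_{\Lambda,\beta_i}+\Z_{\geq 0}$, the integer part of the phase drops out and the scalar above is exactly the exponent in (1).

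For part (2) I would apply the $S$-transformation to \eqref{eq:fact} in two ways and compare. On the affine side, Theorem \ref{gm} with $\gamma=\left(\begin{array}{cc}0 & -1\\ 1 & 0\end{array}\right)$ and Lemma \ref{same} give
\begin{equation}\label{eq:affS}
\chi_\Lambda(w,\tfrac{v}{\tau},\tfrac{-1}{\tau})=\tau^{\wt[w]}e^{\pi i k\<v,v\>/\tau}\sum_{\Lambda'\in P_+^k}S_{\Lambda,\Lambda'}\,\chi_{\Lambda'}(w,v,\tau),
\end{equation}
into whose right-hand side I substitute \eqref{eq:fact} for each $\Lambda'$. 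On the left I substitute \eqref{eq:fact} at $(v/\tau,-1/\tau)$ and use the classical transformation of the lattice theta functions (Poisson summation, using $Q_L^{\circ}=P$), as in \cite{K} and \cite{KP},
\begin{equation}\label{eq:thetaS}
\Theta_\mu(\tfrac{v}{\tau},\tfrac{-1}{\tau})=|P/kQ_L|^{-1/2}e^{\pi i k\<v,v\>/\tau}\sum_{\mu'}e^{2\pi i\<\mu,\mu'\>/k}\Theta_{\mu'}(v,\tau),
\end{equation}
where the completion of the square produces the same Gaussian factor $e^{\pi i k\<v,v\>/\tau}$ as in \eqref{eq:affS}, and the factor $(-i\tau)^{l/2}$ is cancelled by $\eta(-1/\tau)^l=(-i\tau)^{l/2}\eta(\tau)^l$, so that no extra power of $\tau$ survives. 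Cancelling the common factor $e^{\pi i k\<v,v\>/\tau}$ and invoking the linear independence of $\{\Theta_{\mu'}(v,\tau)\}$ as functions of $v$ (distinct cosets modulo $kQ_L$ have disjoint Fourier support) lets me equate coefficients of each $\Theta_{\mu'}$. Finally, inverting the resulting discrete Fourier transform over the finite abelian group $Q/kQ_L$, using the character duality and orthogonality of Lemma \ref{QLP}, solves for $Z_{M^{\Lambda,\Lambda+\beta_i}}(w,-1/\tau)$ and reassembles the right-hand side into exactly $\tau^{\wt[w]}\sum_{\Lambda',j}S^P_{(\Lambda,\Lambda+\beta_i),(\Lambda',\Lambda'+\beta_j)}Z_{M^{\Lambda',\Lambda'+\beta_j}}(w,\tau)$ with $S^P=|P/kQ_L|^{-1/2}S_{\Lambda,\Lambda'}e^{2\pi i\<\Lambda+\beta_i,\Lambda'+\beta_j\>/k}$, as claimed.

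The main obstacle is the simultaneous bookkeeping of the three transformation laws — the affine $S$-matrix $S_{\Lambda,\Lambda'}$, the lattice theta $S$-matrix, and the $\eta^l$-factor — and the proof that their product collapses to the single clean matrix $S^P$. The delicate points are, first, the exact normalization and phase in \eqref{eq:thetaS} together with the verification that its Gaussian and $\tau$-power match those coming from \eqref{eq:affS}, so that only $\tau^{\wt[w]}$ remains; and second, the Fourier inversion over $Q/kQ_L$, where one must check that pairing the coefficient identities against the characters $g_\beta$ of Lemma \ref{QLP} isolates a single $Z_{M^{\Lambda,\Lambda+\beta_i}}(w,-1/\tau)$ and regroups the right-hand side into the advertised double sum over $P_+^k\times Q/kQ_L$, consistently with the module identifications of Theorem \ref{DR}(2),(3). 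The insertion of $w$ introduces no genuinely new difficulty beyond the branching-function case $w=\1$ treated in \cite{K}: once \eqref{eq:fact} and Theorem \ref{gm} are in place, $w$ travels through the argument as an inert label, which is exactly why the branching-function formula of Theorem \ref{t4.2} persists verbatim.
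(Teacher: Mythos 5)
Your proposal is correct in substance and follows essentially the same route as the paper's own proof: the factorization of $\chi_{\Lambda}(w,h,\tau)$ via the decomposition (\ref{e5.3}), convergence inherited from the lattice and parafermion factors, the affine-side $S$-transformation from Theorem \ref{gm} combined with Lemma \ref{same}, linear independence of the lattice theta functions, and a final inversion. Your discrete Fourier inversion over the cosets is exactly the paper's multiplication by $\overline{S_L}$ (legitimate because $S_L^{-1}=\overline{S_L}$), so the two inversion steps are the same computation in different clothing; note only that the orthogonality sum runs over $\mu'\in P/kQ_L$, with Lemma \ref{QLP} (equivalently $P^{\circ}=Q_L$) guaranteeing that the character $\mu'\mapsto e^{2\pi i\<\beta,\mu'\>/k}$ is trivial on $P$ exactly when $\beta\in kQ_L$. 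Your additional check that $\wt[w]$ computed in $L_{\wg}(k,0)$ agrees with $\wt[w]$ computed in $K(\g,k)$ (because $L^{\mathrm{Heis}}(n)w=0$ for $n\geq 0$) is a worthwhile point that the paper leaves implicit.

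One slip must be fixed: your displayed lattice theta transformation has the wrong sign in the exponent. The correct formula (the paper's step (c), obtained by Poisson summation) reads
$$\Theta_{\mu}\Bigl(\frac{v}{\tau},\frac{-1}{\tau}\Bigr)=|P/kQ_L|^{-1/2}\,e^{\pi i k\<v,v\>/\tau}\sum_{\mu'\in P/kQ_L}e^{-2\pi i\<\mu,\mu'\>/k}\,\Theta_{\mu'}(v,\tau),$$
with $e^{-2\pi i\<\mu,\mu'\>/k}$ rather than $e^{+2\pi i\<\mu,\mu'\>/k}$; the plus-sign version would instead involve $\Theta_{-\mu'}(v,\tau)$, and the two differ unless $2\mu'\in kQ_L$. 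This sign is not cosmetic: pairing the coefficient identities against the characters $e^{+2\pi i\<\Lambda+\beta_i,\mu'\>/k}$ and summing over $P/kQ_L$ turns the minus sign into the plus sign appearing in $S^P_{(\Lambda,\Lambda+\beta_i),(\Lambda',\Lambda'+\beta_j)}=|P/kQ_L|^{-1/2}S_{\Lambda,\Lambda'}e^{2\pi i\<\Lambda+\beta_i,\Lambda'+\beta_j\>/k}$. Carried through literally with the sign as you wrote it, your own inversion would produce $e^{-2\pi i\<\Lambda+\beta_i,\Lambda'+\beta_j\>/k}$, contradicting the (correct) formula you claim at the end. With that sign corrected, your argument is complete and coincides with the paper's.
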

\begin{proof}  (1) is straightforward.  The proof  (2) is  divided in several steps.

(a) $\chi_{ L_{\wg}(k,\Lambda)}(w,h,q)$ converges to a holomorphic function $\chi_{ L_{\wg}(k,\Lambda)}(w,h,\tau)$
for any $w\in K(\g,k)$ and $h\in \h$ where we have identified $x\in \g$ with $x(-1)\1\in L_{\wg}(k,0)_1.$

We denote the Virasoro vectors of $L_{\wg}(k,0),$ $V_{\sqrt{k}Q_L}$ and $K(\g,k)$ by
$\omega^a,$ $\omega^l$ and $\omega^p$ and denote the corresponding components of the vertex operators by
$L^a(n),$ $L^l(n)$ and $L^p(n)$  respectively  for $n\in \Z.$ Also denote the Virasoro central charges  of $L_{\wg}(k,0)$ and $K(\g,k)$ by
$c^a$ and $c^p$ respectively. Then $c^a=l+c^p.$

Recall decomposition (\ref{e5.3}).  Then
\begin{eqnarray*}
& &\chi_{ L_{\wg}(k,\Lambda)}(w,h,q)=\tr_{ L_{\wg}(k,\Lambda)}o(w)e^{2\pi i h(0)}q^{L^a(0)-c^a/24}\\
& &\ \ =\sum_{i\in Q/kQ_L}\tr_{V_{\sqrt{k}Q_L+\frac{1}{\sqrt{k}}(\Lambda+\beta_i)\otimes M^{\Lambda,\Lambda+\beta_i}}}o(w)e^{2\pi i h(0)}q^{L^a(0)-c^a/24}\\
& &\ \ =\sum_{i\in Q/kQ_L}\tr_{V_{\sqrt{k}Q_L+\frac{1}{\sqrt{k}}(\Lambda+\beta_i)}}e^{2\pi i h(0)}q^{L^l(0)-l/24}\tr_{ M^{\Lambda,\Lambda+\beta_i}}o(w)q^{L^p(0)-c^p/24}\\
& &\ \ =\sum_{i\in Q/kQ_L}\chi_{V_{\sqrt{k}Q_L+\frac{1}{\sqrt{k}}(\Lambda+\beta_i)}}(h,q)Z_{ M^{\Lambda,\Lambda+\beta_i}}(w,q).
\end{eqnarray*}
Note that $\chi_{V_{\sqrt{k}Q_L+\frac{1}{\sqrt{k}}(\Lambda+\beta_i)}}(h,q)$ converges to a holomorphic function
$\chi_{V_{\sqrt{k}Q_L+\frac{1}{\sqrt{k}}(\Lambda+\beta_i)}}(h,\tau)$ \cite{K} and $Z_{ M^{\Lambda,\Lambda+\beta_i}}(w,q)$
converges to a holomorphic function $Z_{ M^{\Lambda,\Lambda+\beta_i}}(w,\tau) $ \cite{Z}. Thus  $\chi_{ L_{\wg}(k,\Lambda)}(w,h,q)$ converges.

For short, we set $ \chi_{\Lambda}(w,h,\tau)=\chi_{ L_{\wg}(k,\Lambda)}(w,h,\tau).$

(b) By Theorems \ref{gm}, \ref{k} and Lemma \ref{same} we have
$$\chi_{\Lambda}(w,\frac{h}{\tau},\frac{-1}{\tau})=\tau^{\wt[w]}e^{\pi ik\<h,h\>/\tau} \sum_{\Lambda'\in P_+^k}S_{\Lambda,\Lambda'}\chi_{\Lambda'}(w,h,\tau).$$

(c) Let $L$ be a positive definite even lattice of rank $l$ with bilinear form $(,).$ Recall from \cite{B} and \cite{FLM} the lattice vertex operator algebra
$V_L=M(1)\otimes \C[L]$ and its irreducible modules $V_{L+\lambda_i}=M(1)\otimes \C[L+\lambda_i]$ where
$L^{\circ}=\cup_{i\in L^{\circ}/L}(L+\lambda_i).$ Then $V_L$ is a vertex operator algebra satisfying V1-V3. In this case
$$\chi_{V_{L+\lambda_i}}(h,\tau)=\tr_{V_{L+\lambda_i}}e^{2\pi i h(0)}q^{L(0)-l/24}=\frac{\theta_{L+\lambda_i}(h,\tau)}{\eta(\tau)^l}$$
where
$$\theta_{L+\lambda_i}(h,\tau)=\sum_{\lambda\in L+\lambda_i}e^{2\pi i(h,\lambda)}q^{(\lambda,\lambda)/2}.$$
It is well  known that $\{\theta_{L+\lambda}|L+\lambda\in L^{\circ}/L\}$ are linearly independent functions on $\h\times \H$
where $\h=\C\otimes_\Z L.$ Thus,  $\{\chi_{L+\lambda}|L+\lambda \in L^{\circ}/L\}$ are linearly independent functions on $\h\times \H.$
 Using the transformation formula
$$\theta_{L+\lambda}(\frac{h}{\tau},\frac{-1}{\tau})=(-i\tau)^{l/2}|L^{\circ}/L|^{-1/2}e^{\pi i(h,h)/\tau}\sum_{\lambda'+L\in L^{\circ}/L}
e^{-2\pi i(\lambda,\lambda')}\theta_{L+\lambda'}(h,\tau)$$
and
$$\eta(-\1/\tau)=(-i\tau)^{1/2}\eta(\tau)$$
we see that
$$\chi_{L+\lambda_i}(\frac{h}{\tau},\frac{-1}{\tau})=|L^{\circ}/L|^{-1/2}e^{\pi i(h,h)/\tau}\sum_{j\in L^{\circ}/L}
e^{-2\pi i(\lambda_i,\lambda_j)}\chi_{L+\lambda_j}(h,\tau).$$
Set $S_{L+\lambda,L+\lambda'}=|L^{\circ}/L|^{-1/2}e^{-2\pi i(\lambda,\lambda')}$ for $L+\lambda,L+\lambda'\in L^{\circ}/L.$
Also set  $$S_L=(S_{L+\lambda,L+\lambda'})_{L+\lambda,L+\lambda'\in L^{\circ}/L}$$
which is the $S$-matrix for the lattice vertex operator algebra $V_L.$

(d) Let  $L=\sqrt{k}Q_L.$ Then $L^{\circ}=\frac{1}{\sqrt{k}}P$  \cite{ADJR}. As in \cite{K} we consider column vector
$$\overrightarrow{\chi(w,h,\tau)}=(\chi_{\Lambda})_{\Lambda\in P_+^k},\ \ \ \ \ \overrightarrow{\chi_{\sqrt{k}Q_L}(h,\tau)}=(\chi_{L+\lambda})_{L+\lambda\in L^{\circ}/L}.$$
Let $S_A=(S_{\Lambda,\Lambda'})_{\Lambda,\Lambda'\in P_+^k}$ which is the $S$-matrix for affine vertex operator algebra $ L_{\wg}(k,0)$ (see Lemma \ref{same}).  Also consider the matrix
$$Z(w,\tau)=(Z_{M^{\Lambda, \lambda}})_{\Lambda\in P_+^k,\lambda+L\in L^{\circ}/L }$$
where $Z_{M^{\Lambda,\lambda}}=0$ if $\lambda$ does not lie in $\Lambda+Q.$ From the discussion above we see that
$$\overrightarrow{\chi(w,h,\tau)}=Z(w,\tau)\overrightarrow{\chi_{\sqrt{k}Q_L}(h,\tau)}.$$
Performing  the transformation of both sides by matrix $\left(\begin{array}{cc} 0 & -1\\ 1 & 0\end{array}\right)$  we see that
$$\tau^{\wt[w]}e^{\pi ik\<h,h\>/\tau}S_A\overrightarrow{\chi(w,h,\tau)}=Z(w,\frac{-1}{\tau})e^{\pi ik\<h,h\>/\tau}S_L\overrightarrow{\chi_{\sqrt{k}Q_L}(h,\tau)}.$$
Or equivalently,
$$\tau^{\wt[w]}S_AZ(w,\tau)\overrightarrow{\chi_{\sqrt{k}Q_L}(h,\tau)}=Z(w,\frac{-1}{\tau})S_L\overrightarrow{\chi_{\sqrt{k}Q_L}(h,\tau)}.$$
The linear independence of functions $\{\chi_{L+\lambda}|L+\lambda\in L^{\circ}/L\}$ implies that
$$\tau^{\wt[w]}S_AZ(w,\tau)=Z(w,\frac{-1}{\tau})S_L.$$
It is well  known from \cite{K} that  $S_L$ is symmetric, unitary and $S_L^{-1}=\overline{S_L}.$  In fact, these properties hold for the $S$-matrix associated to any strong rational vertex operator algebra \cite{DLN}. Finally we deduce
$$Z(w,\frac{-1}{\tau})=\tau^{\wt[w]}S_AZ(w,\tau)\overline{S_L}.$$

(e) Comparing the $(\Lambda,\lambda)$-entries of the both sides gives
\begin{eqnarray*}
& &Z_{M^{\Lambda,\lambda}}(w,\frac{-1}{\tau})=\tau^{\wt[w]}\sum_{\Lambda'\in P_+^k,\lambda'\in P/kQ_L}S_{\Lambda,\Lambda'}\overline{S_{\lambda,\lambda'}}Z_{M^{\Lambda',\lambda'}}(w,\tau)\\
& &\ \ =\tau^{\wt[w]}|P/kQ_L|^{-1/2}\sum_{\Lambda'\in P_+^k,\lambda'\in P/kQ_L}S_{\Lambda,\Lambda'}\e^{2\pi i\frac{\<\lambda,\lambda'\>}{k}}Z_{M^{\Lambda',\lambda'}}(w,\tau).
\end{eqnarray*}

Now we take $\lambda=\Lambda+\beta_i$ for $i\in Q/kQ_L.$ Note that $M^{\Lambda',\lambda'}$ is nonzero if and only if
$\lambda'+kQ_L=\Lambda'+\beta_j+kQ_L$ for some $j\in Q/kQ_L.$ As a result, we see that
$$Z_{M^{\Lambda,\Lambda+\beta_i}}(w,\frac{-1}{\tau})=\tau^{\wt[w]}\sum_{\Lambda'\in P_+^k,j\in Q/kQ_L}S_{(\Lambda,\Lambda+\beta_i),(\Lambda',\Lambda'+\beta_j)}^PZ_{M^{\Lambda',\Lambda'+\beta_j}}(w,\tau)$$
and the proof is complete.
\end{proof}


\section{Connection with orbifold theory}
\setcounter{equation}{0}

Set $H=\frac{1}{k}P.$ For $\alpha\in H$ we define  $g_{\alpha}=e^{2\pi i\alpha(0)}$ where we have identify ${\frak h}$ with ${\frak h}^*$ via the bilinear form $\<,\>.$  Then $g_{\alpha}$ acts on $ L_{\wg}(k,\Lambda)$ for any $\Lambda\in P^k_+$ such that
$$g_{\alpha}Y(u,z)g_{\alpha}^{-1}=Y(g_{\alpha}u,z)$$
 for $u\in L_{\wg}(k,0).$ In particular, $g_{\alpha}$ is an automorphism of
vertex operator algebra $L_{\wg}(k,0).$ Moreover, $g_{\alpha}=1$ on $L_{\wg}(k,0)$ if and only if  $\alpha\in Q^{\circ}.$ That is,
$G=H/Q^{\circ}$ is an automorphism group of $L_{\wg}(k,0).$ For each $\beta\in Q$ we define an irreducible character
$\mu_{\beta}$ of $G$ such that $\mu_{\beta}(g_{\alpha})=g_{\alpha}(\beta).$ Following \cite{DM1} and \cite{DLM0} we use
$L_{\wg}(k,0)^{\mu_\beta}$ to denote the subspace of $L_{\wg}(k,0)$ which is a sum of irreducible $G$-submodule with character $\mu_{\beta}.$ Recall that $Q=\cup_{i\in Q/kQ_L}(kQ_L+\beta_i).$ By Lemma \ref{QLP}, $\{\mu_{\beta_i}|i\in Q/kQ_L\}$ gives a complete list of inequivalent irreducible characters of $G.$ The following result is immediate from
 \cite{DLM0}.
\begin{lem}\label{l4.1} The $L_{\wg}(k,0)$ is a completely reducible $V_{\sqrt{k}Q_L}\otimes K(\g,k)$-module
$$L_{\wg}(k,0)=\bigoplus_{i\in Q/kQ_L}L_{\wg}(k,0)^{\mu_{\beta_i}},$$
and $L_{\wg}(k,0)^{\mu_{\beta_i}}=V_{\sqrt{k}Q_L+\frac{1}{\sqrt{k}}\beta_i}\otimes M^{0,,\beta_i}$ is an irreducible
module for $V_{\sqrt{k}Q_L}\otimes K(\g,k).$ Moreover,  if $i\ne j$ then  $L_{\wg}(k,0)^{\mu_{\beta_i}}$ and
 $L_{\wg}(k,0)^{\mu_{\beta_j}}$ are inequivalent.
\end{lem}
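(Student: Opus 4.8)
The plan is to deduce Lemma \ref{l4.1} from the orbifold duality for a simple vertex operator algebra under a finite abelian automorphism group established in \cite{DLM0}, once every ingredient has been identified explicitly. First I would record the structural hypotheses: $G=H/Q^{\circ}=\frac{1}{k}P/Q^{\circ}$ is a \emph{finite} abelian group, since $Q^{\circ}\subseteq P\subseteq\frac{1}{k}P$ are full-rank lattices; it acts on the simple vertex operator algebra $V=L_{\wg}(k,0)$, and the action is \emph{faithful} because $g_{\alpha}=1$ on $V$ exactly when $\alpha\in Q^{\circ}$. By Lemma \ref{QLP} the characters $\{\mu_{\beta_i}\mid i\in Q/kQ_L\}$ form a complete, irredundant list of $\widehat G$, so the $\mu$-grading of $V$ is indexed precisely by $Q/kQ_L$.

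Next I would pin down the fixed-point subalgebra and the individual isotypic components by a direct eigenspace computation. On the weight space $L_{\wg}(k,0)(\lambda)$ (for $\lambda\in Q$) the automorphism $g_{\alpha}$ acts by the scalar $e^{2\pi i\<\alpha,\lambda\>}=\mu_{\lambda}(g_{\alpha})$, so $L_{\wg}(k,0)(\lambda)$ lies in the $\mu_{\beta_i}$-isotypic component exactly when $\lambda\in\beta_i+kQ_L$. Hence
$$L_{\wg}(k,0)^{\mu_{\beta_i}}=\bigoplus_{\lambda\in\beta_i+kQ_L}L_{\wg}(k,0)(\lambda).$$
Feeding in (\ref{e5.2}), namely $L_{\wg}(k,0)(\lambda)=M_{\widehat{\h}}(k,\lambda)\otimes M^{0,\lambda}$, and using the isomorphism $M^{0,\lambda}\cong M^{0,\beta_i}$ for $\lambda-\beta_i\in kQ_L$ from Theorem \ref{DR}(2), the displayed identity following (\ref{e5.3}) collapses this sum to $V_{\sqrt{k}Q_L+\frac{1}{\sqrt{k}}\beta_i}\otimes M^{0,\beta_i}$. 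Taking $\beta_i\in kQ_L$ (the trivial character) identifies the fixed-point subalgebra $V^{G}=V_{\sqrt{k}Q_L}\otimes M^{0,0}=V_{\sqrt{k}Q_L}\otimes K(\g,k)$.

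Finally I would invoke \cite{DLM0}: for a simple $V$ with a faithful finite abelian $G$, the decomposition $V=\bigoplus_{\chi\in\widehat G}V^{\chi}$ exhibits $V$ as a \emph{completely reducible} $V^{G}$-module whose summands $V^{\chi}$ are nonzero \emph{irreducible} $V^{G}$-modules, pairwise \emph{non-isomorphic} for distinct $\chi$. Combined with the identifications of the previous paragraph and the enumeration $\widehat G=\{\mu_{\beta_i}\}$, this yields all three assertions at once. The genuinely substantive input is the orbifold duality theorem of \cite{DLM0}, which is used as a black box; the remaining work is the bookkeeping that matches each isotypic component to its explicit tensor factorization. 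Accordingly, the only place demanding care — and the main potential obstacle — is verifying that the hypotheses of the \cite{DLM0} theorem really hold here, above all the faithfulness of the $G$-action and the exact identification of $V^{G}$ as the trivial-character component, the semisimplicity of the action being automatic from the finiteness of $G$.
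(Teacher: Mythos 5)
Your proposal is correct and follows exactly the route the paper takes: the paper declares the lemma ``immediate from \cite{DLM0}'' after setting up the faithful action of $G=H/Q^{\circ}$ and the completeness of the characters $\{\mu_{\beta_i}\}$ via Lemma \ref{QLP}, and your writeup simply fills in the details left implicit there (the eigenspace identification of each isotypic component via (\ref{e5.2}), Theorem \ref{DR}(2), the identification $V^{G}=V_{\sqrt{k}Q_L}\otimes K(\g,k)$, and the quantum Galois duality of \cite{DLM0} as the black box). No gaps; the verification of the \cite{DLM0} hypotheses you flag as the main point of care is precisely what the paper's preceding paragraph establishes.
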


If $\Lambda\in P_+^k$ is not zero, $ L_{\wg}(k,\Lambda)$ is still a module for $H$ but not a module for $G$ unless $\Lambda\in Q.$ However, $\alpha\mapsto \tilde{g}_{\alpha}=g_{\alpha}e^{-2\pi i\<\alpha,\Lambda\>}$ gives a $G$-module structure on
 $ L_{\wg}(k,\Lambda).$ It is clear that
\begin{equation}\label{e7.1}
\tilde{g}_{\alpha}Y(u,z)\tilde{g}_{\alpha}^{-1}=Y(g_{\alpha}u,z)
\end{equation}
on  $ L_{\wg}(k,\Lambda).$  A generalization  of Lemma \ref{l4.1} is the following:
\begin{lem}\label{l4.2} The $L_{\wg}(k,\Lambda)$ is a completely reducible $V_{\sqrt{k}Q_L}\otimes K(\g,k)$-module
$$L_{\wg}(k,\Lambda)=\bigoplus_{i\in Q/kQ_L}L_{\wg}(k,\Lambda)^{\mu_{\beta_i}},$$
and $L_{\wg}(k,\Lambda)^{\mu_{\beta_i}}=V_{\sqrt{k}Q_L+\frac{1}{\sqrt{k}}(\Lambda+\beta_i)}\otimes M^{\Lambda,\Lambda+\beta_i}$ is an irreducible
module for $V_{\sqrt{k}Q_L}\otimes K(\g,k).$ Moreover,  if $i\ne j$ then  $L_{\wg}(k,\Lambda)^{\mu_{\beta_i}}$ and
 $L_{\wg}(k,\Lambda)^{\mu_{\beta_j}}$ are inequivalent.
\end{lem}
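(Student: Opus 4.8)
The plan is to upgrade the $V_{\sqrt{k}Q_L}\otimes K(\g,k)$-module decomposition (\ref{e5.3}) to a statement about the $G$-isotypic components, in direct parallel with the passage from \cite{DLM0} to Lemma \ref{l4.1}, but with the shifted action $\tilde g_\alpha$ replacing $g_\alpha$. Complete reducibility and the shape of the summands $V_{\sqrt{k}Q_L+\frac{1}{\sqrt{k}}(\Lambda+\beta_i)}\otimes M^{\Lambda,\Lambda+\beta_i}$ are already supplied by (\ref{e5.3}); the genuinely new content is the irreducibility of each summand, the matching of the $i$-th summand with $L_{\wg}(k,\Lambda)^{\mu_{\beta_i}}$, and the inequivalence of distinct summands.

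First I would note that $g_\alpha$, and hence $\tilde g_\alpha$, fixes the subalgebra $V_{\sqrt{k}Q_L}\otimes K(\g,k)$ pointwise: on $K(\g,k)\subset L_{\wg}(k,0)(0)$ one has $h(0)=0$, while the generator $e^{\sqrt{k}\gamma}$ of $V_{\sqrt{k}Q_L}$ (for $\gamma\in Q_L$) carries affine $\h$-weight $k\gamma$ and $g_\alpha$ scales it by $e^{2\pi i\<k\alpha,\gamma\>}=1$ since $k\alpha\in P=Q_L^{\circ}$. By (\ref{e7.1}) it then follows that $\tilde g_\alpha$ intertwines the $V_{\sqrt{k}Q_L}\otimes K(\g,k)$-action, so the isotypic components $L_{\wg}(k,\Lambda)^{\mu_{\beta_i}}$ really are $V_{\sqrt{k}Q_L}\otimes K(\g,k)$-submodules.

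The heart of the argument is the computation of the scalar by which $\tilde g_\alpha$ acts on the $i$-th summand. By (\ref{e5.2}) and the description following (\ref{e5.3}), this summand is exactly the span of the $\h$-weight spaces $L_{\wg}(k,\Lambda)(\lambda)$ with $\lambda\in\Lambda+\beta_i+kQ_L$; on such a space $g_\alpha$ acts by $e^{2\pi i\<\alpha,\lambda\>}$ thanks to (\ref{e4.1}), and because $k\alpha\in Q_L^{\circ}$ this value equals $e^{2\pi i\<\alpha,\Lambda+\beta_i\>}$, independently of the $kQ_L$-coset representative $\lambda$. Hence $\tilde g_\alpha=g_\alpha e^{-2\pi i\<\alpha,\Lambda\>}$ acts on the whole summand by the single scalar $e^{2\pi i\<\alpha,\beta_i\>}=\mu_{\beta_i}(g_\alpha)$, so the summand lies inside $L_{\wg}(k,\Lambda)^{\mu_{\beta_i}}$; as the $\mu_{\beta_i}$ ($i\in Q/kQ_L$) are pairwise distinct and exhaust the irreducible characters of $G$ (Lemma \ref{QLP}), comparing with the direct sum (\ref{e5.3}) forces $L_{\wg}(k,\Lambda)^{\mu_{\beta_i}}=V_{\sqrt{k}Q_L+\frac{1}{\sqrt{k}}(\Lambda+\beta_i)}\otimes M^{\Lambda,\Lambda+\beta_i}$. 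Irreducibility of this summand over $V_{\sqrt{k}Q_L}\otimes K(\g,k)$ is then immediate from the irreducibility of its two factors (Theorem \ref{DR} for $M^{\Lambda,\Lambda+\beta_i}$) and the fact that a tensor product of irreducible modules over rational vertex operator algebras is irreducible.

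Finally, for $i\neq j$ the inequivalence can be read off already from the lattice factor: $V_{\sqrt{k}Q_L+\frac{1}{\sqrt{k}}(\Lambda+\beta_i)}$ and $V_{\sqrt{k}Q_L+\frac{1}{\sqrt{k}}(\Lambda+\beta_j)}$ are distinct irreducible $V_{\sqrt{k}Q_L}$-modules, since $\frac{1}{\sqrt{k}}(\beta_i-\beta_j)\in\sqrt{k}Q_L$ would force $\beta_i-\beta_j\in kQ_L$, i.e. $i=j$, and distinct first factors make the tensor products inequivalent. I expect the only substantive point to be the scalar computation in the previous paragraph — precisely the coset-constancy of $g_\alpha$ on the $\h$-weights, which rests on the duality $k\alpha\in P=Q_L^{\circ}$ already exploited in Lemma \ref{QLP}; everything else is either read off from (\ref{e5.3}) and Theorem \ref{DR} or mirrors Lemma \ref{l4.1}.
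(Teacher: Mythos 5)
Your proof is correct, but it takes a genuinely different route from the paper's. The paper in fact writes down no proof of Lemma \ref{l4.2}: it is presented as a generalization of Lemma \ref{l4.1}, which is cited as immediate from the compact-automorphism-group (quantum Galois) theory of \cite{DLM0}, and the module version of that duality needed when $\Lambda\neq 0$ is the orbifold-theoretic result of \cite{DY}, \cite{DRX} that the paper invokes explicitly in the proof of Proposition \ref{p7.4}; equation (\ref{e7.1}) is recorded precisely so that this general machinery applies to the shifted action $\tilde{g}_\alpha$. You instead verify everything by hand from the decomposition (\ref{e5.3}): you use (\ref{e7.1}) to see that $\tilde{g}_\alpha$ commutes with the $V_{\sqrt{k}Q_L}\otimes K(\g,k)$-action (since $g_\alpha$ fixes that subalgebra pointwise), you compute via (\ref{e4.1}) and (\ref{e5.2}) that $\tilde{g}_\alpha$ acts on the $i$-th summand by the single scalar $\mu_{\beta_i}(g_\alpha)$ --- the coset-constancy resting on $k\alpha\in P=Q_L^{\circ}$ --- and you then identify the summands with the isotypic components using the distinctness of the characters $\mu_{\beta_i}$ from Lemma \ref{QLP}, getting irreducibility from irreducibility of the two tensor factors and inequivalence from the lattice factor alone. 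Both routes are sound. Yours is self-contained and elementary, using only facts already established in the paper together with the tensor-product results of \cite{FHL} (two small touch-ups: those results require no rationality hypothesis, contrary to your phrasing, and only pairwise distinctness of the $\mu_{\beta_i}$, not exhaustion of $\widehat{G}$, is needed for the matching step). What the paper's route buys is brevity and strength: the same general theorems yield the finer Proposition \ref{p7.4}, which your lattice-factor trick cannot reach, since for $\Lambda\neq\Lambda'$ the lattice factors may coincide (this happens exactly when $\Lambda-\Lambda'+\beta_i-\beta_j\in kQ_L$), so distinguishing those modules genuinely requires either the orbifold machinery or the classification of the $M^{\Lambda,\lambda}$ in \cite{DR}.
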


We can strengthen Lemma \ref{l4.2}:
\begin{prop}\label{p7.4} For $\Lambda\in P_+^k$ and $i\in Q/kQ_L,$ $L_{\wg}(k,\Lambda)^{\mu_{\beta_i}}$ are inequivalent
irreducible $V_{\sqrt{k}Q_L}\otimes K(\g,k)$-modules.
\end{prop}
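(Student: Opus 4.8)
The plan is to separate the isomorphism question into its two tensor factors and then play the lattice datum against the parafermion datum. First I would note that $V_{\sqrt{k}Q_L}$ is a simple, rational, $C_2$-cofinite vertex operator algebra of CFT type, and so is $K(\g,k)$ by Theorem \ref{DR}(1); hence the same holds for $V_{\sqrt{k}Q_L}\otimes K(\g,k)$, and every irreducible $V_{\sqrt{k}Q_L}\otimes K(\g,k)$-module factors uniquely as a tensor product of an irreducible $V_{\sqrt{k}Q_L}$-module with an irreducible $K(\g,k)$-module. Combined with Lemma \ref{l4.2}, this means that an isomorphism $L_{\wg}(k,\Lambda)^{\mu_{\beta_i}}\cong L_{\wg}(k,\Lambda')^{\mu_{\beta_j}}$ is equivalent to the conjunction of two conditions: (A) $V_{\sqrt{k}Q_L+\frac{1}{\sqrt{k}}(\Lambda+\beta_i)}\cong V_{\sqrt{k}Q_L+\frac{1}{\sqrt{k}}(\Lambda'+\beta_j)}$ and (B) $M^{\Lambda,\Lambda+\beta_i}\cong M^{\Lambda',\Lambda'+\beta_j}$.

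Next I would translate each condition. For the lattice factor, the irreducible $V_{\sqrt{k}Q_L}$-modules are parametrized by $L^{\circ}/L=\frac{1}{\sqrt{k}}P/\sqrt{k}Q_L\cong P/kQ_L$, so (A) is exactly the congruence $\Lambda+\beta_i\equiv\Lambda'+\beta_j\pmod{kQ_L}$ read in $P/kQ_L$. For the parafermion factor, I would invoke the classification of irreducible $K(\g,k)$-modules from \cite{DR}: by Theorem \ref{DR}(2),(3) the isomorphism class of $M^{\Lambda,\lambda}$ depends only on the orbit of $(\Lambda,\lambda+kQ_L)$ under the simple-current group $\Gamma$ generated by the maps $J_i\colon(\Lambda,\lambda)\mapsto(\Lambda^{(i)},\lambda+k\Lambda_i)$, and $\Gamma\cong P/Q$ via $g\mapsto\mu_g+Q$, the identification under which the generators $J_i$ correspond to the fundamental weights $\Lambda_i$ with $a_i=1$. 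Thus (B) produces an element $g\in\Gamma$ with $\Lambda'=\Lambda^{(g)}$ and, collecting the additive weight shifts along $g$, $\Lambda'+\beta_j\equiv\Lambda+\beta_i+k\mu_g\pmod{kQ_L}$, where $\mu_g\in P$ is the accumulated shift.

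Finally I would combine the two. Subtracting the congruence coming from (B) from the congruence (A) gives $k\mu_g\equiv 0\pmod{kQ_L}$, that is $\mu_g\in Q_L\subseteq Q$. Since the homomorphism $\Gamma\to P/Q$, $g\mapsto\mu_g+Q$, is injective (indeed an isomorphism), $\mu_g\in Q$ forces $g=1$. Hence $\Lambda'=\Lambda^{(1)}=\Lambda$ and $\beta_j\equiv\beta_i\pmod{kQ_L}$, i.e.\ $i=j$ in $Q/kQ_L$. This shows the modules $L_{\wg}(k,\Lambda)^{\mu_{\beta_i}}$ are pairwise inequivalent, and their irreducibility is already Lemma \ref{l4.2}, completing the proof.

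I expect the main obstacle to be the second paragraph: pinning down precisely that (B) is governed by the $\Gamma$-action and that the weight shift attached to each generator $J_i$ is exactly $k\Lambda_i$, so that it can be cancelled cleanly against (A). This is where the full classification of \cite{DR} together with the standard identification $\Gamma\cong P/Q$ must be used with care; once the shift $k\mu_g$ has been correctly isolated, the remaining lattice-theoretic cancellation $\mu_g\in Q_L\subseteq Q\Rightarrow g=1$ is immediate.
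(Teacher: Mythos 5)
Your reduction of the problem to the two conditions (A) and (B) is sound, and for simply-laced $\g$ your argument is correct as written. But the final cancellation step contains a genuine error: the homomorphism $\Gamma\to P/Q$, $g\mapsto\mu_g+Q$, is \emph{not} injective in general, and ``$\mu_g\in Q\Rightarrow g=1$'' is false for non-simply-laced $\g$. Concretely, for $\g$ of type $B_l$ the only mark equal to $1$ is $a_1$, so $\Gamma=\{1,J_1\}$ with weight shift $\mu_{J_1}=\Lambda_1=\epsilon_1$, and $\epsilon_1$ lies in the root lattice $Q=\sum_{i}\Z\epsilon_i$; thus the nontrivial simple current is sent to the \emph{trivial} coset of $P/Q$. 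The same happens in type $C_l$ with $l$ even, where $\mu_{J_l}=\Lambda_l=\frac{1}{\sqrt{2}}(\epsilon_1+\cdots+\epsilon_l)\in Q$. What your argument actually requires is the stronger implication ``$\mu_g\in Q_L\Rightarrow g=1$,'' and since $Q_L\subsetneq Q$ precisely in these cases, it cannot be obtained by passing through $P/Q$: one must verify directly that $\mu_g\notin Q_L$ for all $g\neq 1$. This is true --- for $B_l$ one has $\epsilon_1\notin Q_L$ (indeed $0,\epsilon_1$ represent the cosets of $Q/Q_L$, as in the proof of Lemma \ref{QLP}), and for $C_l$ one has $\Lambda_l\notin Q_L=\sqrt{2}\sum_i\Z\epsilon_i$ --- but it is a separate lattice-theoretic check, not a consequence of any statement about $P/Q$. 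Note also that the same fact is needed to legitimize the converse direction you use in (B) (an isomorphism $M^{\Lambda,\lambda}\cong M^{\Lambda',\lambda'}$ only arises from the $\Gamma$-action): if you want to deduce that converse from Theorem \ref{DR} (2), (3), (5) by a freeness-plus-counting argument instead of quoting the full classification of \cite{ADJR}, the freeness of the $\Gamma$-action on the pairs $(\Lambda,\lambda+kQ_L)$ is precisely guaranteed by $\mu_g\notin Q_L$ for $g\neq 1$. So the gap sits at the crux of the proof; once ``$\mu_g\in Q_L\Rightarrow g=1$'' is established by inspection of the root systems, your argument closes.

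For comparison, the paper's proof takes an entirely different and much shorter route that never touches the classification of irreducible $K(\g,k)$-modules: using equation (\ref{e7.1}) it observes that each $L_{\wg}(k,\Lambda)$ is stable under the automorphism group $G=\frac{1}{k}P/Q^{\circ}$ of $L_{\wg}(k,0)$, and then invokes the general duality theorem of orbifold theory \cite{DY}, \cite{DRX} for the pair consisting of $G$ and its fixed-point subalgebra $V_{\sqrt{k}Q_L}\otimes K(\g,k)$: isotypic components of inequivalent $G$-stable irreducible modules, and distinct isotypic components of a single one, are automatically inequivalent irreducible modules for the fixed-point algebra. Your approach, once repaired, gives an explicit hands-on verification, but at the cost of exactly the delicate non-simply-laced lattice arithmetic that tripped you up; the paper's argument is uniform in $\g$ and avoids it altogether.
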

\begin{proof} From equation (\ref{e7.1}) we know that $L_{\wg}(k,\Lambda)\circ g_{\alpha}$ and $L_{\wg}(k,\Lambda)$ are
isomorphic  $L_{\wg}(k,0)$-modules where $L_{\wg}(k,\Lambda)\circ g_{\alpha}=L_{\wg}(k,\Lambda)$ as vector spaces
and $Y_{L_{\wg}(k,\Lambda)\circ g_{\alpha}}(v,z)=Y_{L_{\wg}(k,\Lambda)}(g_{\alpha}v,z)$ for $v\in  L_{\wg}(k,0).$ According to a general result in orbifold theory \cite{DY}, \cite{DRX},  $L_{\wg}(k,\Lambda)^{\mu_{\beta_i}}$ are inequivalent
irreducible $V_{\sqrt{k}Q_L}\otimes K(\g,k)$-modules.
\end{proof}

We can now express  the trace functions  $Z_{M^{\Lambda.\Lambda+\beta_i}}(w,\tau)$  in terms of $\chi_\Lambda(w,\alpha,\tau)$ and the characters of irreducible modules for
lattice vertex operator algebra $V_L$ with $L=\sqrt{k}Q_L.$
For $w\in K(\g,k)$ we have
\begin{eqnarray*}
& &Z_{V_{\sqrt{k}Q_L+\frac{1}{\sqrt{k}}(\Lambda+\beta_i)}\otimes M^{\Lambda,\Lambda+\beta_i}}(w,\tau)=\frac{1}{|Q/kQ_L|}\sum_{\alpha\in G}\tr_{L_{\wg}(k,\Lambda)}o(w)\tilde{g}_{\alpha}q^{L(0)-c/24}e^{-2\pi i\<\alpha,\beta_i\>}\\
& &\ \ \ =\frac{1}{|Q/kQ_L|}\sum_{\alpha\in G}\tr_{L_{\wg}(k,\Lambda)}o(w)e^{2\pi i\alpha(0)}q^{L(0)-c/24}e^{-2\pi i\<\alpha,\beta_i+\Lambda\>}\\
& &\ \  \ =\frac{1}{|Q/kQ_L|}\sum_{\alpha\in G}e^{-2\pi i\<\alpha,\beta_i+\Lambda\>}\chi_{\Lambda}(w,\alpha,\tau).
\end{eqnarray*}
This implies the following:
\begin{prop} For $\Lambda\in P_+^k,$  $i\in Q/kQ_L$ and $w\in K(\g,k)$ we have
 $$Z_{M^{\Lambda,\Lambda+\beta_i}}(w,\tau)=
 \frac{1}{|Q/kQ_L|}\frac{\eta(\tau)^l}{\theta_{\sqrt{k}Q_L+\frac{1}{\sqrt{k}}(\Lambda+\beta_i)}(\tau)}\sum_{\alpha\in G}e^{-2\pi i\<\alpha,\beta_i+\Lambda\>}
 \chi_{\Lambda}(w,\alpha,\tau).$$
\end{prop}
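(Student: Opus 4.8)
The plan is to recover $Z_{M^{\Lambda,\Lambda+\beta_i}}(w,\tau)$ from the trace function of $w$ on the tensor product module $V_{\sqrt{k}Q_L+\frac{1}{\sqrt{k}}(\Lambda+\beta_i)}\otimes M^{\Lambda,\Lambda+\beta_i}$, whose value has already been computed in the display immediately preceding the statement. The one extra ingredient needed is that this trace splits as a product of the plain lattice character and $Z_{M^{\Lambda,\Lambda+\beta_i}}(w,\tau)$; dividing by the lattice character then isolates the desired quantity.

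First I would invoke Lemma \ref{l4.2} (equivalently the decomposition (\ref{e5.3})), which exhibits $V_{\sqrt{k}Q_L+\frac{1}{\sqrt{k}}(\Lambda+\beta_i)}\otimes M^{\Lambda,\Lambda+\beta_i}$ as an irreducible $V_{\sqrt{k}Q_L}\otimes K(\g,k)$-module. Because $w\in K(\g,k)$, the vertex operator of $w$ on this tensor product acts only on the parafermion factor $M^{\Lambda,\Lambda+\beta_i}$, so that $o(w)$ leaves the lattice factor untouched. Writing the total Virasoro element and central charge as $\omega^a=\omega^l+\omega^p$ and $c^a=l+c^p$ exactly as in step (a) of the proof of Theorem \ref{mthm}, the grading operator $q^{L^a(0)-c^a/24}$ factors as $q^{L^l(0)-l/24}\otimes q^{L^p(0)-c^p/24}$, and the trace over the tensor product becomes the product of traces
$$Z_{V_{\sqrt{k}Q_L+\frac{1}{\sqrt{k}}(\Lambda+\beta_i)}\otimes M^{\Lambda,\Lambda+\beta_i}}(w,\tau)=\chi_{V_{\sqrt{k}Q_L+\frac{1}{\sqrt{k}}(\Lambda+\beta_i)}}(\tau)\,Z_{M^{\Lambda,\Lambda+\beta_i}}(w,\tau),$$
where $\chi_{V_{\sqrt{k}Q_L+\frac{1}{\sqrt{k}}(\Lambda+\beta_i)}}(\tau)=\tr_{V_{\sqrt{k}Q_L+\frac{1}{\sqrt{k}}(\Lambda+\beta_i)}}q^{L^l(0)-l/24}$ is the character of the lattice module with no insertion.

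Next I would rewrite the lattice character using part (c) of the proof of Theorem \ref{mthm} specialized to $h=0$ and $L=\sqrt{k}Q_L$: there one has $\chi_{V_{L+\lambda}}(h,\tau)=\theta_{L+\lambda}(h,\tau)/\eta(\tau)^l$, so at $h=0$ this gives $\chi_{V_{\sqrt{k}Q_L+\frac{1}{\sqrt{k}}(\Lambda+\beta_i)}}(\tau)=\theta_{\sqrt{k}Q_L+\frac{1}{\sqrt{k}}(\Lambda+\beta_i)}(\tau)/\eta(\tau)^l$. Since this theta function is a convergent $q$-series with a nonzero leading term, it never vanishes on $\H$, so I may divide by it. Solving the factorization for $Z_{M^{\Lambda,\Lambda+\beta_i}}(w,\tau)$ and substituting the preceding display
$$Z_{V_{\sqrt{k}Q_L+\frac{1}{\sqrt{k}}(\Lambda+\beta_i)}\otimes M^{\Lambda,\Lambda+\beta_i}}(w,\tau)=\frac{1}{|Q/kQ_L|}\sum_{\alpha\in G}e^{-2\pi i\<\alpha,\beta_i+\Lambda\>}\chi_{\Lambda}(w,\alpha,\tau)$$
yields the asserted identity.

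The computation is short, and the only genuine point requiring attention is the trace factorization in the second paragraph: I must verify that $o(w)$ is block-diagonal with respect to the tensor decomposition and commutes correctly with the grading, so that no cross terms survive and the trace genuinely splits. This is precisely the mechanism already employed in step (a) of the proof of Theorem \ref{mthm}, so I expect no real obstacle; the result is essentially a rearrangement of the identity in step (a) combined with the explicit form of the lattice character.
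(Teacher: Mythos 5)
Your proposal is correct and is essentially the paper's own argument: the paper proves the proposition by exactly the orbifold-projection display you cite, followed by the division you carry out, with the trace factorization $Z_{V_{\sqrt{k}Q_L+\frac{1}{\sqrt{k}}(\Lambda+\beta_i)}\otimes M^{\Lambda,\Lambda+\beta_i}}(w,\tau)=\eta(\tau)^{-l}\,\theta_{\sqrt{k}Q_L+\frac{1}{\sqrt{k}}(\Lambda+\beta_i)}(\tau)\,Z_{M^{\Lambda,\Lambda+\beta_i}}(w,\tau)$ (the same splitting mechanism as in step (a) of the proof of Theorem \ref{mthm}) left implicit in the paper and made explicit by you. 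The only blemish is your justification for dividing: a nonzero leading $q$-coefficient does not by itself preclude zeros of the theta function in $\H$, but since the zero set of a nonzero holomorphic function is discrete, the identity obtained off that set extends to the asserted one, so this does not affect correctness.
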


\end{document}